\date{}
\newcommand{\V}[1]{\mbox{\boldmath $ #1 $}}
\def \p{\partial}
\newcommand{\bey}{\begin{eqnarray}}
\newcommand{\eey}{\end{eqnarray}}
\newcommand{\nn}{\nonumber}
\newcommand{\beq}{\begin{equation}}
\newcommand{\eeq}{\end{equation}}
\theoremstyle{plain}
\newtheorem{thm}{\hspace{6mm}Theorem}[section]
\theoremstyle{definition}
\theoremstyle{remark}
\newtheorem{exam}{\hspace{6mm}Example}[section]
\newtheorem{rem}{\hspace{6mm}Remark}[section]
\title{Unconditionally stable high-order time integration for
moving mesh finite difference solution of linear convection-diffusion equations}
\author{Weizhang~Huang
\thanks{Department of Mathematics, 
the University of Kansas, Lawrence, KS 66045, U.S.A. 
({\em huang@math.ku.edu}). This work was supported
in part by the NSF under Grants DMS-0712935 and DMS-1115118.}
}
\begin{document}
\vskip 1cm
\maketitle

\begin{abstract}
This paper is concerned with moving mesh finite difference solution of partial differential equations.
It is known that mesh movement introduces an extra convection term and its numerical treatment has
a significant impact on the stability of numerical schemes. Moreover, many implicit second and
higher order schemes, such as the Crank-Nicolson scheme, will loss their unconditional stability.
A strategy is presented for developing temporally high order, unconditionally stable finite difference schemes
for solving linear convection-diffusion equations using moving meshes. 
Numerical results are given to demonstrate the theoretical findings.
\end{abstract}

\noindent{\bf AMS 2010 Mathematics Subject Classification.}
65M06, 65M12, 65L20

\noindent{\bf Key Words.}
Moving mesh, high order method, time integration, stability, finite difference. 

\noindent{\bf Abbreviated title.}
Unconditionally stable high order time integration on moving meshes.

\section{Introduction}

In the last two decades, moving mesh methods have attracted considerable attention
from scientists and engineers and been successfully applied to
a variety of problems; e.g., see \cite{BHR09,HR11} and references therein.
A common feature of those methods is that they employ a time varying
mesh to follow the motion of the physical boundary and/or certain solution properties.
Generally speaking, partial differential equations (PDEs) can be discretized
directly on a moving mesh using finite element and finite volume methods.
When a finite difference method is desired, it is common practice to transform
PDEs to a reference domain and discretize them on a fixed mesh defined thereon.
The interested reader is referred to \cite{HR11} for more detailed discussion
on the discretization of PDEs on moving meshes. 
It should be emphasized that, despite what discretization method is used, the movement of the mesh
inevitably introduces an extra convection term into PDEs, whose numerical treatment often
has a significant impact on the stability and convergence of moving mesh methods. The term also
makes the analysis of stability and convergence much more difficult for moving mesh methods
even when linear PDEs are involved.

While a number of convergence results have been developed for the numerical solution of
two-point boundary value problems using equidistributing meshes (a type of moving meshes)
(e.g., see \cite{BM00,BM01,BM01b,CX08,HH08a,Hua05d,KS01,Mac99,QS99,QST00}),
theoretical studies of moving mesh methods for time dependent PDEs are far from complete.
For example, for linear convection-diffusion problems 
Dupont \cite{Dup82}, Bank and Santos \cite{BS93}, Dupont and Liu \cite{DL02},
and Liu et al. \cite{LBDGS03} establish symmetric error estimates for various finite element methods
(FEM), including semi-discrete FEM,
semi-discrete mixed FEM, FEM-implicit Euler, and space-time FEM.
These results essentially require the conditions (or their discrete counterparts)
\bey
&& | \dot{\V{x}}(\V{x},t) - \V{b}(\V{x},t)| \le C_1,\quad \forall \V{x} \in \Omega, \;  t > 0
\label{BS93-1}
\\
&& | \nabla \cdot \dot{\V{x}}(\V{x},t)| \le C_2 ,\quad \forall \V{x} \in \Omega, \;  t > 0
\label{BS93-2}
\eey
where
$C_1$ and $C_2$ are positive constants, $\Omega$ is the physical domain,
$\dot{\V{x}}$ is the mesh speed, and $\V{b}$ is the coefficient of the convection term (see (\ref{ibvp}) below).
Condition (\ref{BS93-1}) requires that the mesh move not too fast with reference to $\V{b}$.
To see the geometric meaning of (\ref{BS93-2}), we view the moving mesh as the image of
a computational mesh under a coordinate transformation
$\V{x} = \V{x}(\V{\xi}, t)$: $\Omega_c \to \Omega$, where $\Omega_c$
is the computational domain. Denote the Jacobian of the coordinate transformation by $J$,
i.e., $J = \mbox{det}(\p \V{x}/\p \V{\xi})$. It is known \cite{HR11} that
\[
\frac{1}{J} \dot{J} = \nabla \cdot \dot{\V{x}} .
\]
Then (\ref{BS93-2}) reduces to
\beq
|\dot{J}| \le C_2 |J| .
\label{BS93-3}
\eeq
Since $J$ represents the volume of mesh elements, (\ref{BS93-3}), or equivalently (\ref{BS93-2}),
requires that the relative change of the volume of mesh elements be in a constant order.

Formaggia and Nobile \cite{FN99,FN04} and Boffi and Gastaldi \cite{BG04} study the relation between
stability and satisfaction of the geometric conservation law (GCL) \cite{TL79} in the Arbitrary-Langragian-Eulerian
formulation \cite{HAC74} with finite element spatial discretization. They show that satisfying GCL
is neither a necessary nor a sufficient condition for the stability of a scheme although it often helps improve
accuracy and enhance stability. In particular, Boffi and Gastaldi show that the FEM-implicit Euler scheme
is only conditionally stable.
Formaggia and Nobile \cite{FN99,FN04} present several modifications of the implicit Euler, Crank-Nicolson,
and BDF2 (two step backward differentiation formula \cite{CH52,Hen62}) schemes,
which can be made to satisfy GCL. They show that the FEM-modified implicit Euler scheme
is unconditionally stable whereas the FEM-modified Crank-Nicolson and FEM-BDF(2) schemes
are only conditionally stable with the maximum allowable time step depending on $\nabla \cdot \dot{\V{x}}$.

Ferreira \cite{Fer97} studies a finite difference (FDM)-implicit Euler scheme applied to the nonconservative form
of transformed linear convection-diffusion equations and shows that the scheme is stable
and convergent under discrete analogs of conditions (\ref{BS93-1}) and (\ref{BS93-3}).
Mackenzie and Mekwi \cite{MM07} consider an FDM-$\theta$ scheme
for the conservative form of transformed linear convection-diffusion equations. They show that
the FDM-implict Euler scheme is unconditionally stable in an energy norm but the Crank-Nicolson scheme
is only conditionally stable. They also show that the FDM-$\theta$ scheme
can be made to be unconditionally stable and of second order in time (and in space)
when $\theta$ is properly chosen depending on the mesh. Although this variable $\theta$ scheme is
unconventional, there are no other moving mesh methods that are known to be of second
or higher order in time and unconditionally stable. 
More recently, Huang and Schaeffer \cite{HS11} show that several FDM-$\theta$ moving mesh schemes
for one dimensional convection-diffusion problems are stable in $L^\infty$ norm 
under the conventional CFL condition and a mesh speed related condition which can be roughly expressed as
\beq
| \dot{x} - b | \le \frac{C_3}{h},
\label{HS-1}
\eeq
where $C_3$ is a constant and $h$ is the maximum spacing of the mesh. Notice that this condition is weaker
than (\ref{BS93-1}).

It is well known that high-order methods are practically important in enhancing the computational accuracy
and efficiency. It is also important in theory to know if unconditionally stable high-order methods exist
in the moving mesh context. The objective of this paper is to develop unconditionally stable,
high-order (in time) moving finite difference schemes for the solution of
the initial-boundary value problem (IBVP) of a general linear convection-diffusion equation,
\beq
\begin{cases}
\frac{\p u}{\p t} + \nabla \cdot (u \V{b}) + c u = \nabla \cdot (a \nabla u) + f ,\quad \forall \V{x} \in \Omega, \; t \in (0,T]
\\
u(\V{x},t) = g(\V{x},t),\quad \forall \V{x} \in \p \Omega
\\
u(\V{x}, 0) = u^0(\V{x}), \quad \forall \V{x} \in \Omega
\end{cases}
\label{ibvp}
\eeq
where $\Omega \subset \mathbb{R}^d$ ($d = 1,2$) is the physical domain with probably moving
or deforming boundary and $a = a(\V{x},t)$, 
$\V{b} = \V{b}(\V{x},t)$, $c=c(\V{x}, t)$, $f = f(\V{x}, t)$, $g = g(\V{x}, t)$, and $u^0(\V{x})$ are given,
sufficiently smooth functions. For the posedness of IBVP, we assume that the coefficients satisfy
\bey
&& 0 < \underline{a} \le a(\V{x},t) \le \overline{a} < \infty, \quad \forall \V{x} \in \Omega,\; t \in (0,T]
\label{coef-1}
\\
&& c(\V{x}, t) + \frac{1}{2} \nabla \cdot \V{b}(\V{x}, t) \ge 0, \quad \forall \V{x} \in \Omega,\; t \in (0,T]
\label{coef-2}
\eey
where $\underline{a}$ and $\overline{a}$ are two positive constants. 
We note that for stability analysis, we only need to consider the homogeneous system,
i.e., the IBVP with $f \equiv 0$ and $g \equiv 0$. In this situation, it is easy to show that
the solution satisfies the stability inequality
\beq
\frac{d }{d t} \int_\Omega u^2 d \V{x}  \le 0.
\label{si-1}
\eeq
The key to our development is the preservation of stability inequality (\ref{si-1}).
We use the method of lines approach and first discretize the PDE in space in such a way
that the resulting system of ordinary differential equations (ODEs) satisfies a semi-discrete
stability inequality. Then high-order schemes satisfying a fully discrete version of (\ref{si-1})
are constructed for integrating the ODE system. The resulting schemes are stable
in the energy norm corresponding to the fully discrete stability inequality.

An outline of this paper is given as follows. In \S 2, high-order unconditionally stable schemes for
ODE systems satisfying a stability inequality are developed. Strategies for spatially
discretizing IBVP (\ref{ibvp}) into such an ODE system in one and two dimensions are explored
in \S 3 and \S 4. Numerical examples obtained with the developed schemes
are presented in \S 5. Finally, \S 6 contains conclusions and comments. 

\section{High-order unconditionally stable schemes for nonautonomous ODE systems}

In this section we present an approach of constructing unconditionally stable schemes for
the initial value problem of a nonautonomous ODE system
\beq
\begin{cases}
M(t) \frac{d \V{u}}{d t} = A(t) \V{u} + \V{f}(t),\\
\V{u}(0) = \V{u}^0
\end{cases}
\label{ode-3}
\eeq
where $M(t)$ and $A(t)$ are $l \times l$ matrices (for some positive integer $l$) and $\V{f}(t)$ is a given vector-valued
function. We assume that  $M(t)$ is symmetric and positive definite and $A(t)+ \sqrt{M}(t)\frac{d \sqrt{M}}{d t}(t)$ is
negative semi-definite, i.e., 
\beq
\V{z}^T \left (A(t)+ \sqrt{M}(t)\frac{d \sqrt{M}}{d t}(t)\right ) \V{z} \le 0,\quad \forall \V{z} \in \mathbb{R}^l,\quad \forall t > 0
\label{cond-1}
\eeq
where $\sqrt{M}$ denotes the square root of $M$. 
As will be seen in \S\ref{SEC:1d} and \S\ref{SEC:2d}, such systems arise from
finite difference discretization of IBVP (\ref{ibvp}) on moving meshes, with
$M(t)$ and $A(t)$ being the mass and stiffness matrices, respectively.

The idea of the approach is simple. Indeed, we rewrite (\ref{ode-3}) into
\beq
 \frac{d }{d t}\left ( \sqrt{M} \V{u} \right ) = (\sqrt{M})^{-1}\left (A +  \sqrt{M}\frac{d \sqrt{M}}{d t}\right )\V{u} 
 + (\sqrt{M})^{-1} \V{f},
\label{ode-5}
\eeq
which can further be written as
\beq
\frac{d \V{v}}{d t} = B(t) \V{v} + (\sqrt{M}(t))^{-1} \V{f}(t) ,
\label{ode-v-1}
\eeq
where 
\bey
&& \V{v}(t) = \sqrt{M}(t) \V{u}(t) ,
\label{v-1}
\\
&& B(t) = (\sqrt{M})^{-1} \left (A +  \sqrt{M}\frac{d \sqrt{M}}{d t}\right )(\sqrt{M})^{-1}.
\label{B-1}
\eey
Then, unconditionally stable schemes can be obtained by applying conventional implicit schemes to (\ref{ode-v-1}).
The initial condition for the new variable is
\beq
\V{v}(0) = \V{v}^0 \equiv \sqrt{M}(0) \V{u}^0.
\label{ic-v-1}
\eeq
Moreover, assumption (\ref{cond-1}) implies that $B(t)$ is negative semi-definite for any $t > 0$.

In the following, we illustrate the idea with three schemes. To this end, we assume that
a partition is given for $[0, T]$,
\[
t_0 = 0 < t_1 < \cdots < t_N = T .
\]
We denote $\Delta t_n = t_{n+1} - t_n$.

We first apply the backward Euler discretization to (\ref{ode-v-1}). It gives
\beq
  \frac{\V{v}^{n+1}-\V{v}^n}{\Delta t_n} = B(t_{n+1}) \; \V{v}^{n+1}
  + (\sqrt{M}(t_{n+1}))^{-1} \V{f}(t_{n+1})  .
\label{ImEuler}
\eeq
Since $B$ is negative semi-definite, for the homogeneous situation (with $\V{f} = 0$)
we can readily show that (\ref{ImEuler}) satisfies
\beq
(\V{v}^{n+1})^T\V{v}^{n+1} \le (\V{v}^{n})^T \V{v}^{n},
\label{si-3}
\eeq
which can be written in terms of the original variable $\V{u}$ as
\beq
(\V{u}^{n+1})^T \sqrt{M}(t_{n+1}) \V{u}^{n+1} \le (\V{u}^{n})^T \sqrt{M}(t_{n}) \V{u}^{n} .
\label{si-4}
\eeq
Inequality (\ref{si-4}) is a discrete analog to (\ref{si-1}). It implies that the backward Euler scheme (\ref{ImEuler}),
which is of first order, is unconditionally stable.

A second order unconditionally stable scheme can be obtained by applying the midpoint discretization
to (\ref{ode-v-1}). Indeed, we have 
\beq
 \frac{\V{v}^{n+1}-\V{v}^n}{\Delta t_n} = B\left (\frac{t_{n}+t_{n+1}}{2}\right ) \frac{\V{v}^{n+1}+\V{v}^n}{2}
 + \left (\sqrt{M}\left (\frac{t_{n}+t_{n+1}}{2}\right )\right )^{-1} \V{f}\left (\frac{t_{n}+t_{n+1}}{2}\right ) .
\label{siip2-1}
\eeq
It is easy to show that the solution of (\ref{siip2-1}) (with $\V{f} = 0$) satisfies stability inequality (\ref{si-3}).

Higher order unconditionally stable schemes can be obtained by applying collocation schemes to (\ref{ode-v-1}).
To explain this, we consider the $m$ Gauss-Legendre points in $(0,1)$, $\rho_1$, ..., $\rho_m$, and
denote the collocation points by
\beq
t_{n,j} = t_n + \rho_j \Delta t_{n} , \quad j = 1, ..., m .
\label{col-points}
\eeq
Applying the $m$-point collocation scheme (e.g., see \cite{AMR88,Asc08}) to (\ref{ode-v-1}), we get,
for $n = 0, ..., N-1$,
\beq
 \frac{d \V{v}_h}{d t}(t_{n,j}) = B (t_{n,j}) \V{v}_h(t_{n,j}) + (\sqrt{M}(t_{n,j}))^{-1} \V{f}(t_{n,j}) ,\quad  j = 1, ..., m
\label{col-1}
\eeq 
where $\V{v}_h(t)$, an approximation to the exact solution $\V{v}(t)$, is continuous on $[0, T]$ and a polynomial
of degree $m$ on each subinterval $[t_n, t_{n+1}]$.

We can rewrite (\ref{col-1}) into a more explicit form. Let $\tilde{\rho}_1$, ..., $\tilde{\rho}_{m-1}$ be
the $(m-1)$ Gauss-Legendre points in $(0,1)$ and denote $\tilde{\rho}_0 = 0$ and $\tilde{\rho}_m = 1$. Define
\beq
\tilde{t}_{n,j} = t_n + \tilde{\rho}_j \Delta t_n, \quad j = 0, ..., m.
\label{interp-points}
\eeq
(We may also take $\tilde{\rho}_0,...,\tilde{\rho}_m$ as the $(m+1)$ Gauss-Lobatto Legendre points on $[0,1]$.)
Then, $\V{v}_h$ can be expressed as
\beq
\V{v}_h(t) = \sum_{k=0}^m \tilde{\V{v}}_{n,k} \tilde{\ell}_k\left (\frac{t-t_n}{\Delta t_n}\right),\quad \forall t \in [t_n, t_{n+1}],\;
n = 0, ..., N-1
\label{v-2}
\eeq
where $\tilde{\V{v}}_{n,k} \approx \V{v}(\tilde{t}_{n,k})$ ($k = 0, ..., m$) are the unknown variables
and $\tilde{\ell}_k$'s are the Lagrange polynomials associated with nodes $\tilde{\rho}_0$, ..., $\tilde{\rho}_{m}$, i.e.,
\[
\tilde{\ell}_k(\rho) = \prod\limits_{i= 0\atop i \neq k}^m \frac{\rho - \tilde{\rho}_i}{\tilde{\rho}_k-\tilde{\rho}_i} .
\]
Inserting (\ref{v-2}) into (\ref{col-1}), we have
\beq
 \frac{1}{\Delta t_n} \sum\limits_{k=0}^m \tilde{\V{v}}_{n,k} \tilde{\ell}_k^{\; '}\left (\rho_j\right )
 = B (t_{n,j}) \sum\limits_{k=0}^m \tilde{\V{v}}_{n,k} \tilde{\ell}_k\left (\rho_j\right )
 + (\sqrt{M}(t_{n,j}))^{-1} \V{f}(t_{n,j}),
\quad  j = 1, ..., m
\label{col-2}
\eeq 
This equation, together with the initial condition
\beq
\tilde{\V{v}}_{n,0}  = \tilde{\V{v}}_{n-1,m}, 
\label{v-4}
\eeq
forms a system of ODEs for unknowns $\tilde{\V{v}}_{n,k}$, $k = 0, ..., m$.

Once $\tilde{\V{v}}_{n,m}$ has been obtained, we can compute
the original variable $\V{u}$ at $t = t_{n+1}$ by
\beq
\V{u}^{n+1} = (\sqrt{M})^{-1}(t_{n+1}) \; \tilde{\V{v}}_{n,m}, \quad n=0, ..., N-1.
\label{v-5}
\eeq
Moreover, it is known (e.g., see \cite{AMR88,Asc08}) that the convergence order of the $m$-point collocation scheme
(\ref{col-1}) is $2 m$.
(It is easy to verify that scheme (\ref{col-1}) reduces to the midpoint scheme (\ref{siip2-1}) when $m=1$.)

In the following we show that scheme (\ref{col-1}) (with $\V{f} = 0$) satisfies (\ref{si-3}).
To this end, we denote $\V{v}^{n+1} = \tilde{\V{v}}_{n,m}$ for $n = 0, ..., N-1$.
Obviously, $\V{v}^{n+1} = \V{v}_h(t_{n+1})$.
Let $\omega_1,\, \cdots,\, \omega_m$ be the weights for the Gaussian quadrature rule associated with the points
$\rho_1, \, \cdots,\, \rho_{m}$, viz., 
\[
\int_0^1 f(\rho ) d \rho \approx \sum_{j=1}^m \omega_j f(\rho_j) .
\]
It is known that $\omega_j$'s are positive
and the quadrature rule is exact for all polynomials of degree up to $(2m-1)$. 
Multiplying (\ref{col-1}) with $\Delta t_n \omega_j \V{v}_h(t_{n,j})^T $ and
summing the result over $j$, we have
\[
\Delta t_n \sum_{j=1}^m \omega_j \V{v}_h(t_{n,j})^T \frac{d \V{v}_h}{d t} (t_{n,j})
= \Delta t_n \sum_{j=1}^m \omega_j \V{v}_h(t_{n,j})^T B(t_{n,j}) \V{v}_h(t_{n,j}).
\]
The negative semi-definiteness of $B$ implies that the right-hand side is nonpositive.
Moreover,  $\V{v}_h^T \frac{d \V{v}_h}{d t}$ is a polynomial of degree at most $(2 m-1)$ and
the sum on the left-hand side is equal to the integral of $\V{v}_h^T \frac{d \V{v}_h}{d t}$ over $[t_n, t_{n+1}]$.
Thus, we have
\[
\int_{t_n}^{t_{n+1}} \V{v}_h^T \frac{d \V{v}_h}{d t} d t \le 0.
\]
This, combined with the fact that
\[
\int_{t_n}^{t_{n+1}} \V{v}_h^T \frac{d \V{v}_h}{d t} d t
= \frac{1}{2} \int_{t_n}^{t_{n+1}} \frac{d}{d t} \left (\V{v}_h^T \V{v}_h\right ) d t
= \frac{1}{2} ( (\V{v}^{n+1})^T \V{v}^{n+1} - (\V{v}^{n})^T \V{v}^{n} ) ,
\]
leads to (\ref{si-3}).

\vspace{10pt}

It is remarked that scheme (\ref{col-1}) is expressed in the new variable $\V{v}$.
It can be reformulated in terms of the original variable $\V{u}$ using the relation (\ref{v-1}).
Moreover, scheme (\ref{col-1}) involves $\sqrt{M}$ and its inverse whose computation can be expensive
in general. In our current situation with finite difference discretization for PDEs,
however, the mass matrix $M$ is diagonal and its square root and the inverse can be
computed easily (see the next two sections) and thus the involvement of $\sqrt{M}$
and its inverse has very mild effects on the computational efficiency.
The argument also goes for finite volume methods or finite element methods with lumped mass matrix. 

\section{1D convection-diffusion equations}
\label{SEC:1d}

In this and next sections, we study the finite difference discretization of IBVP (\ref{ibvp}).
Our goal is to obtain schemes that satisfy the property (\ref{cond-1}) and thus unconditionally stable
integration schemes can be developed.
We consider one dimensional convection-diffusion equations in this section and two dimensional ones in
the next section.
 
In one dimension, IBVP (\ref{ibvp}) becomes
\beq
\begin{cases}
u_t + (b u)_x + c u = (a u_x)_x + f,\qquad \forall x\in (x_l(t), x_r(t)), \quad 0 < t \le T
\\
u(x_l(t), t) = g(x_l(t), t),\quad u(x_r(t), t) = g(x_r(t), t),
\\
u(x, t) = u^0(x).
\end{cases}
\label{ibvp-1d}
\eeq
Notice that this setting permits moving domains.

For spatial discretization, we assume that a moving mesh is given at time instants $t_0,\, \cdots,\, t_N$, i.e.,
\[
x_0^n = x_l(t_n) < x_1^n < \cdots < x_{J_{max}}^n = x_r(t_n) ,\quad n = 0, \cdots, N.
\]
The mesh points are considered to vary linearly on each tim subinterval, viz.,
\[
x_j(t) = \frac{t_{n+1}-t}{\Delta t_n} x_j^n + \frac{t-t_n}{\Delta t_n} x_j^{n+1}, \quad
\forall t \in [t_n, t_{n+1}],\; j = 0, ..., J_{max}.
\]
Notice that the mesh speed on $[t_n, t_{n+1}]$ is constant, viz.,
\[
\dot{x}_j(t) =  \frac{x_j^{n+1}-x_j^n}{\Delta t_n} , \quad \forall t \in [t_n, t_{n+1}],\; j = 0, ..., J_{max}.
\]
We view this moving mesh as the image of a uniform computational
mesh under a coordinate transformation. Denote the coordinate transformation by
$x=x(\xi, t)$: $[0,{J_{max}}] \to [x_l(t),\; x_r(t)]$. Then the moving mesh can be expressed as
\[
x_j(t) = x(\xi_j, t), \quad \xi_j=j,\quad  j = 0, ..., {J_{max}}.
\]

To discretize the PDE (\ref{ibvp-1d}) on the moving mesh, we first transform it from the physical domain
into the computational domain using the coordinate transformation. It is easy to show (e.g., see \cite{HR11})
that the transformed PDE can be written either in a conservative form as
\beq
x_\xi \dot{u} + \dot{x}_\xi  u + \frac{\p}{\p \xi} ((b-\dot{x}) u) + x_\xi c u = \frac{\p }{\p \xi}
\left (\frac{a}{x_\xi} \frac{\p  u}{\p \xi}\right ) + x_\xi f ,
\label{pde-3}
\eeq
or in a non-conservative form as
\beq
x_\xi \dot{u} - \dot{x} \frac{\p u}{\p \xi} +  \frac{\p}{\p \xi} ( bu)
+ x_\xi c u = \frac{\p }{\p \xi} \left (\frac{a}{x_\xi} \frac{\p  u}{\p \xi}\right ) + x_\xi f,
\label{pde-4}
\eeq
where $\dot{x}$ (the mesh speed) and $\dot{u}$ denote the time derivatives of $x$ and $u$ in the new variables
$t$ and $\xi$, respectively.

We now consider a central finite difference scheme based on the conservative form (\ref{pde-3}). It reads as
\bey
&& \frac{h_{j+1}+h_j}{2} \dot{u}_j + \frac{\dot{h}_{j+1}+\dot{h}_j}{2} u_j
+ (b_{j+\frac 12}-\dot{x}_{j+\frac 12})\frac{u_{j+1}+u_j}{2} 
- (b_{j-\frac 12}-\dot{x}_{j-\frac 12})\frac{u_{j}+u_{j-1}}{2}  \nn \\
&&\qquad \qquad  + \; \frac{h_{j+1}+h_j}{2} c_j u_j = a_{j+\frac 12}\frac{u_{j+1}-u_{j}}{h_{j+1}}
- a_{j-\frac 12}\frac{u_{j}-u_{j-1}}{h_{j}} + \frac{h_{j+1}+h_j}{2} f_j,
\label{fd-2}
\eey
where $u_j = u_j(t) \approx u(x_j(t), t)$, $\dot{x}_{j+\frac 1 2} = (\dot{x}_j + \dot{x}_{j+1})/2$,
and $h_j = h_j(t) = x_{j}(t) - x_{j-1}(t)$. 
The boundary condition has the discrete form as
\beq
u_0(t) = g(x_l(t), t), \quad u_{J_{max}}(t) = g(x_r(t), t).
\label{bc-3}
\eeq
Notice that a special treatment has
been used in (\ref{fd-2}) for the convection term, viz.,
\bey
\left. \frac{\p}{\p \xi} ((b-\dot{x}) u)\right |_{x_j} & \approx &
(b-\dot{x})_{j+\frac 1 2} u_{j+\frac 1 2} - (b-\dot{x})_{j-\frac 1 2} u_{j-\frac 1 2}
\nn \\
& \approx &
(b-\dot{x})_{j+\frac 1 2} \frac{u_{j} + u_{j+1}}{2} - (b-\dot{x})_{j-\frac 1 2} \frac{u_{j} + u_{j-1}}{2} .
\nn
\eey
In words, the flux $(b-\dot{x}) u$ is approximated at half mesh points $x_{j+\frac 12}$.
This treatment is crucial for scheme (\ref{fd-2}) to satisfy condition (\ref{cond-1}).

Scheme (\ref{fd-2}) can be cast into the matrix form (\ref{ode-3}), with the mass and stiffness matrices given by,
for $j = 1, ..., J_{max}-1$,
\bey
(M \V{u})_j  & = & \frac{h_{j+1}+h_j}{2} u_j,
\label{mass-1}
\\
(A\V{u})_j & = & a_{j+\frac 12}\frac{u_{j+1}-u_{j}}{h_{j+1}} - a_{j-\frac 12}\frac{u_{j}-u_{j-1}}{h_{j}} 
 -  \frac{\dot{h}_{j+1}+\dot{h}_j}{2} u_j  - \frac{h_{j+1}+h_j}{2} c_j u_j  \nn \\
&& - \;  (b_{j+\frac 12}-\dot{x}_{j+\frac 12})\frac{u_{j+1}+u_j}{2} 
+ (b_{j-\frac 12}-\dot{x}_{j-\frac 12})\frac{u_{j}+u_{j-1}}{2} ,
\label{stiff-1}
\eey
where $\V{u} = [u_0, ..., u_{J_{max}}]^T$.
Notice that the mass matrix $M$ is diagonal and its square root is
\beq
\sqrt{M} = \mbox{diag}\left (\sqrt{\frac{h_{j+1}+h_j}{2}} \right ) .
\label{mass-2}
\eeq
Moreover, 
\beq
\frac{d }{d t} \sqrt{\frac{h_{j+1}+h_j}{2}} = \frac{\sqrt{2}}{4} \frac{(\dot{h}_{j+1}+\dot{h}_j)}{\sqrt{h_{j+1}+h_j}} .
\label{mass-3}
\eeq

\begin{thm}
\label{thm3.1}
Assume that there holds
\beq
c_j + \frac{1}{2} \frac{(b_{j+\frac 12} - b_{j-\frac 12})}{(h_{j+1}+h_j)/2} \ge 0,
\quad j = 1, ..., {J_{max}}-1.
\label{coef-3}
\eeq
Then, the finite difference scheme (\ref{fd-2}) satisfies the condition (\ref{cond-1}).
As a consequence,
the fully discrete scheme resulting from the application of the time integration (\ref{col-1}) to (\ref{ode-3})
with $M$ and $A$ defined in (\ref{mass-1}) and (\ref{stiff-1})
is unconditionally stable and of order $(2m)$ in time and order 2 in space.
\end{thm}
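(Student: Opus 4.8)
The plan is to verify condition (\ref{cond-1}) for scheme (\ref{fd-2}) by an explicit computation of the quadratic form $\V{z}^{T}\big(A(t)+\sqrt{M}(t)\frac{d\sqrt{M}}{dt}(t)\big)\V{z}$, after which the asserted unconditional stability and accuracy follow from \S2. Since $M$ is diagonal, so is $\sqrt{M}\frac{d\sqrt{M}}{dt}$, and by (\ref{mass-2})--(\ref{mass-3}) its $j$-th diagonal entry is $\frac{1}{4}(\dot{h}_{j+1}+\dot{h}_j)=\frac{1}{4}(\dot{x}_{j+1}-\dot{x}_{j-1})$; hence it cancels exactly half of the $-\frac{1}{2}(\dot{h}_{j+1}+\dot{h}_j)u_j$ term in the stiffness matrix (\ref{stiff-1}). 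Working with the interior unknowns and the boundary convention $z_0=z_{J_{max}}=0$ (the homogeneous case), and writing $\beta_{j+\frac12}=b_{j+\frac12}-\dot{x}_{j+\frac12}$, I would split the quadratic form into a diffusion part, a reaction part, the convection-flux part $-\sum_j z_j\big(\beta_{j+\frac12}\frac{z_{j+1}+z_j}{2}-\beta_{j-\frac12}\frac{z_j+z_{j-1}}{2}\big)$, and the residual mesh-speed part $-\frac{1}{4}\sum_j(\dot{x}_{j+1}-\dot{x}_{j-1})z_j^{2}$.

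A discrete summation by parts handles the diffusion part: it equals $-\sum_j a_{j+\frac12}\frac{(z_{j+1}-z_j)^{2}}{h_{j+1}}\le 0$ by the positivity (\ref{coef-1}) of $a$. For the convection-flux part, a summation by parts combined with the half-point averaging built into (\ref{fd-2}) yields the telescoping identity $-\sum_j z_j\big(\beta_{j+\frac12}\frac{z_{j+1}+z_j}{2}-\beta_{j-\frac12}\frac{z_j+z_{j-1}}{2}\big)=\frac{1}{2}\sum_j\beta_{j+\frac12}(z_{j+1}^{2}-z_j^{2})$. Splitting $\beta_{j+\frac12}=b_{j+\frac12}-\dot{x}_{j+\frac12}$, a second summation by parts turns the $b$-part into $-\frac{1}{2}\sum_j(b_{j+\frac12}-b_{j-\frac12})z_j^{2}$, which together with the reaction part $-\sum_j\frac{h_{j+1}+h_j}{2}c_j z_j^{2}$ gives precisely $-\sum_j\frac{h_{j+1}+h_j}{2}\big(c_j+\frac{1}{2}\frac{b_{j+\frac12}-b_{j-\frac12}}{(h_{j+1}+h_j)/2}\big)z_j^{2}\le 0$ by hypothesis (\ref{coef-3}).

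The crux of the argument, and the step demanding the most careful index bookkeeping, is to show that the leftover $\dot{x}$-part of the convection term exactly cancels the residual mesh-speed part. Writing $\dot{x}_{j+\frac12}=\frac{1}{2}(\dot{x}_j+\dot{x}_{j+1})$ and summing by parts, $-\frac{1}{2}\sum_j\dot{x}_{j+\frac12}(z_{j+1}^{2}-z_j^{2})$ rearranges to $+\frac{1}{4}\sum_j(\dot{x}_{j+1}-\dot{x}_{j-1})z_j^{2}$, which annihilates $-\frac{1}{4}\sum_j(\dot{x}_{j+1}-\dot{x}_{j-1})z_j^{2}$; every boundary contribution drops because $z_0=z_{J_{max}}=0$. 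Collecting terms, $\V{z}^{T}\big(A+\sqrt{M}\frac{d\sqrt{M}}{dt}\big)\V{z}=-\sum_j a_{j+\frac12}\frac{(z_{j+1}-z_j)^{2}}{h_{j+1}}-\sum_j\frac{h_{j+1}+h_j}{2}\big(c_j+\frac{1}{2}\frac{b_{j+\frac12}-b_{j-\frac12}}{(h_{j+1}+h_j)/2}\big)z_j^{2}\le 0$, which is exactly (\ref{cond-1}). This is where the particular discretization choices are genuinely used: the half-point flux averaging in (\ref{fd-2}) makes the convection term telescope, and the $\sqrt{M}$-based reformulation of \S2 supplies the extra diagonal term without which the mesh-speed contributions would not cancel.

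Finally, once (\ref{cond-1}) holds the remainder is essentially citation. By (\ref{B-1}) the matrix $B(t)$ is negative semi-definite, so the $m$-point collocation scheme (\ref{col-1})--(\ref{v-5}) applied to (\ref{ode-v-1}) satisfies the fully discrete stability inequality (\ref{si-3})---equivalently (\ref{si-4}) in the original variable $\V{u}$---for every choice of time steps $\Delta t_n$, i.e., it is unconditionally stable. Its temporal order $2m$ is the standard convergence order of Gauss--Legendre collocation (see \cite{AMR88,Asc08}), and the second-order spatial accuracy follows from a routine truncation-error analysis of the central scheme (\ref{fd-2}) on the smoothly varying mesh. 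I would also remark that the diagonality of $M$ makes $\sqrt{M}$ and $(\sqrt{M})^{-1}$ trivial to form, so the reduction introduces no additional obstruction.
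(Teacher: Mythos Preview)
Your proposal is correct and follows essentially the same route as the paper's proof: both compute $\V{u}^T\big(A+\sqrt{M}\tfrac{d}{dt}\sqrt{M}\big)\V{u}$ explicitly, use summation by parts to turn the diffusion term into $-\sum a_{j-\frac12}(u_j-u_{j-1})^2/h_j$, use the half-point flux averaging to collapse the convection term to $-\tfrac12\sum(b_{j+\frac12}-b_{j-\frac12})u_j^2$, and observe the exact cancellation between the $\dot{x}$-part of the convection flux and the residual $-\tfrac14\sum(\dot{h}_{j+1}+\dot{h}_j)u_j^2$ coming from the mass terms. The only cosmetic difference is that you group via $\beta_{j+\frac12}=b_{j+\frac12}-\dot{x}_{j+\frac12}$ and then split, whereas the paper handles the $b$- and $\dot{x}$-contributions separately from the outset; the computations are identical.
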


\begin{proof}
Recall that we only need to consider the homogeneous situation with $f \equiv 0$ and $g \equiv 0$.
From (\ref{bc-3}), (\ref{stiff-1}), (\ref{mass-2}), and (\ref{mass-3}), we have
\bey
&& \V{u}^T (A + \sqrt{M} \frac{d }{d t} \sqrt{M}) \V{u} 
\nn \\
& = & \sum_{j=1}^{{J_{max}}-1} u_j \left [a_{j+\frac 12}\frac{u_{j+1}-u_{j}}{h_{j+1}} - a_{j-\frac 12}\frac{u_{j}-u_{j-1}}{h_{j}}\right ]
\nn\\
&& + \sum_{j=1}^{{J_{max}}-1} u_j \left [- (b_{j+\frac 12}-\dot{x}_{j+\frac 12})\frac{u_{j+1}+u_j}{2} 
+ (b_{j-\frac 12}-\dot{x}_{j-\frac 12})\frac{u_{j}+u_{j-1}}{2} \right ]
\nn \\
& & - \sum_{j=1}^{{J_{max}}-1} \frac{\dot{h}_{j+1}+\dot{h}_j}{2} u_j^2 - \sum_{j=1}^{{J_{max}}-1} \frac{h_{j+1}+h_j}{2} c_j u_j^2
 + \sum_{j=1}^{{J_{max}}-1} \frac{\dot{h}_{j+1}+\dot{h}_j}{4 } u_j^2 .
\label{e3-1}
\eey
For the first term on the right-hand side, using the boundary condition (\ref{bc-3}) and summation by parts we get
\bey
&& \sum_{j=1}^{{J_{max}}-1} u_j \left [a_{j+\frac 12}\frac{u_{j+1}-u_{j}}{h_{j+1}} - a_{j-\frac 12}\frac{u_{j}-u_{j-1}}{h_{j}}\right ]
\nn \\
& = & \sum_{j=2}^{{J_{max}}} u_{j-1} a_{j-\frac 12}\frac{u_{j}-u_{j-1}}{h_{j}} 
- \sum_{j=1}^{{J_{max}}-1} u_j a_{j-\frac 12}\frac{u_{j}-u_{j-1}}{h_{j}}
\nn \\
& = & - \sum_{j=1}^{{J_{max}}} a_{j-\frac 12}\frac{(u_{j}-u_{j-1})^2}{h_{j}} .
\label{e3-2}
\eey
For the second term, we have
\bey
&& \sum_{j=1}^{{J_{max}}-1} u_j \left [- b_{j+\frac 12}\frac{u_{j+1}+u_j}{2} 
+ b_{j-\frac 12}\frac{u_{j}+u_{j-1}}{2} \right ]
\nn \\
& = & - \sum_{j=2}^{{J_{max}}} u_{j-1} b_{j-\frac 12}\frac{u_{j}+u_{j-1}}{2} 
+ \sum_{j=1}^{{J_{max}}-1} u_j b_{j-\frac 12}\frac{u_{j}+u_{j-1}}{2}
\nn \\
& = & \sum_{j=2}^{{J_{max}}-1} b_{j-\frac 12}\frac{u_{j}^2-u_{j-1}^2}{2} 
- u_{{J_{max}}-1} b_{{J_{max}}-\frac 12}\frac{u_{{J_{max}}}+u_{{J_{max}}-1}}{2}  + u_1 b_{1-\frac 12}\frac{u_{1}+u_{0}}{2}
\nn \\
& = & \sum_{j=1}^{{J_{max}}} b_{j-\frac 12}\frac{u_{j}^2-u_{j-1}^2}{2} 
\nn \\
& = & \frac{1}{2} \sum_{j=1}^{{J_{max}}} b_{j-\frac 12}u_{j}^2 - \frac{1}{2} \sum_{j=0}^{{J_{max}}-1} b_{j+\frac 12}u_{j}^2
\nn \\
& = & - \frac{1}{2} \sum_{j=1}^{{J_{max}}-1} (b_{j+\frac 12} - b_{j-\frac 12}) u_j^2  .
\label{e3-3}
\eey
Inserting (\ref{e3-2}) and (\ref{e3-3}) into (\ref{e3-1}), we get
\bey
&& \V{u}^T (A + \sqrt{M} \frac{d }{d t} \sqrt{M}) \V{u}
\nn \\
& = & - \sum_{j=1}^{{J_{max}}} a_{j-\frac 12}\frac{(u_{j}-u_{j-1})^2}{h_{j}} 
- \sum_{j=1}^{{J_{max}}-1} \frac{\dot{h}_{j+1}+\dot{h}_j}{4} u_j^2 - \sum_{j=1}^{{J_{max}}-1} \frac{h_{j+1}+h_j}{2} c_j u_j^2
\nn \\
& & - \frac{1}{2} \sum_{j=1}^{{J_{max}}-1} (b_{j+\frac 12} - b_{j-\frac 12}) u_j^2
+ \frac{1}{2} \sum_{j=1}^{{J_{max}}-1} (\dot{x}_{j+\frac 12} - \dot{x}_{j-\frac 12}) u_j^2
\nn \\
& = & - \sum_{j=1}^{{J_{max}}} a_{j-\frac 12}\frac{(u_{j}-u_{j-1})^2}{h_{j}} 
 - \sum_{j=1}^{{J_{max}}-1} \frac{h_{j+1}+h_j}{2} u_j^2
 \left [ c_j + \frac{1}{2} \frac{(b_{j+\frac 12} - b_{j-\frac 12})}{(h_{j+1}+h_j)/2} \right ] .
\nn
\eey 
The assumption (\ref{coef-3}) implies that the right-hand side is nonpositive. Thus,
scheme (\ref{fd-2}) satisfies the condition (\ref{cond-1}).
\end{proof}

\vspace{10pt}

\begin{rem}
\label{rem3.1}
The assumption (\ref{coef-3}) is a discrete analog to the continuous condition (\ref{coef-2}).
It is satisfied when $b$ is constant and $c$ is nonnegative. For the general situation with variable $b$,
it is reasonable to expect (\ref{coef-3}) to hold too provided that the continuous condition (\ref{coef-2}) holds and
the mesh is sufficiently fine.
\hfill \qed
\end{rem}

\vspace{10pt}

It is instructive to spell out one of the full discrete schemes. We consider the case with $m=1$ which results from
the application of the midpoint discretization to the $\V{v}$ equation (\ref{ode-v-1}). Let
\[
m_j(t) = \sqrt{\frac{h_{j+1}(t)+h_j(t)}{2}}.
\]
Then, the new and old variables are related by $v_j = m_j u_j$
and (\ref{fd-2}) reads as
\bey
&& m_j \dot{v_j}  + \frac{\dot{h}_{j+1}+\dot{h}_j}{4  }\frac{v_j}{m_j} + \frac{1}{2}
(b_{j+\frac 12}-\dot{x}_{j+\frac 12})\left (\frac{v_{j+1}}{m_{j+1}} +\frac{v_j}{m_{j}} \right )
\nn \\
&& \qquad  - \frac{1}{2} (b_{j-\frac 12}-\dot{x}_{j-\frac 12})\left (\frac{v_j}{m_{j}} 
+\frac{v_{j-1}}{m_{j-1}} \right ) + m_j c_j v_j  \nn \\
&&\qquad \qquad   = \;  \frac{a_{j+\frac 12}}{h_{j+1}}
\left (\frac{v_{j+1}}{m_{j+1}}  - \frac{v_j}{m_{j}} \right )
- \frac{a_{j-\frac 12}}{h_j} \left (\frac{v_j}{m_{j}} -\frac{v_{j-1}}{m_{j-1}} \right ) + m_j f_j .
\label{fd-2+1}
\eey
Applying the midpoint discretization to the above equation, we get
\bey
&& m_j^{n+\frac 1 2}\; \frac{v_j^{n+1}-v_j^n}{\Delta t_n}
 + \frac{\dot{h}_{j+1}^{n+\frac1 2}+\dot{h}_j^{n+\frac 1 2}}{8 }\; \frac{v_j^n+v_j^{n+1}}{m_j^{n+\frac 1 2}} 
  + \frac{1}{4}(b_{j+\frac 12}^{n+\frac 1 2}-\dot{x}_{j+\frac 12}^{n+\frac 1 2})
\left (\frac{v_{j+1}^n+v_{j+1}^{n+1}}{m_{j+1}^{n+\frac 1 2}} + \frac{v_{j}^n+v_{j}^{n+1}}{m_j^{n+\frac 1 2}}\right )
\nn \\
&& \qquad  - \frac{1}{4} (b_{j-\frac 12}^{n+\frac 1 2}-\dot{x}_{j-\frac 12}^{n+\frac 1 2} )\left (
\frac{v_{j}^n+v_{j}^{n+1}}{m_j^{n+\frac 1 2}} + \frac{v_{j-1}^n+v_{j-1}^{n+1}}{m_{j-1}^{n+\frac 1 2}}\right )
+ \frac 1 2 m_j^{n+\frac 1 2} c_j^{n+\frac 1 2} (v_j^n+v_j^{n+1})  \nn \\
&&\qquad \qquad   = \;  \frac{a_{j+\frac 12}^{n+\frac 1 2}}{2 h_{j+1}^{n+\frac 1 2}}
\left (\frac{v_{j+1}^n + v_{j+1}^{n+1}}{m_{j+1}^{n+\frac 1 2}}  - \frac{v_j^n + v_j^{n+1}}{m_j^{n+\frac 1 2}} \right ) 
- \frac{a_{j-\frac 12}^{n+\frac 1 2}}{2 h_j^{n+\frac 1 2}} \left (\frac{v_j^n+v_j^{n+1}}{m_j^{n+\frac 1 2}}
-\frac{v_{j-1}^n+v_{j-1}^{n+1}}{m_{j-1}^{n+\frac 1 2}} \right ) 
\nn \\
&&\qquad \qquad \qquad   +\;  m_j^{n+\frac 1 2} f_j^{n+\frac 1 2}  .
\label{fd-2+2}
\eey
This scheme is unconditionally stable and of second order in both time and space.

Next, we consider finite difference schemes based on the non-conservative form (\ref{pde-4}).
Approximating the mesh movement related convection term using the half point fluxes, i.e.,
\bey
\left.  \dot{x} \frac{\p u}{\p \xi}\right |_{x_j} & \approx &
\left.  \frac{1}{2} \dot{x} \frac{\p u}{\p \xi}\right |_{x_{j+\frac 1 2}}
\left. + \frac{1}{2} \dot{x} \frac{\p u}{\p \xi}\right |_{x_{j-\frac 1 2}}
\nn \\
& \approx &
 \frac{1}{2} \dot{x}_{j+\frac 12} (u_{j+1}-u_{j})
+ \frac{1}{2} \dot{x}_{j-\frac 12} (u_{j}-u_{j-1}) ,
\nn
\eey
we have
\bey
&& \frac{h_{j+1}+h_j}{2} \dot{u}_j 
- \frac{1}{2} \dot{x}_{j+\frac 12} (u_{j+1}-u_{j}) - \frac{1}{2} \dot{x}_{j-\frac 12} (u_{j}-u_{j-1})
\nn \\
&& \qquad +\; b_{j+\frac 12}\frac{u_{j+1}+u_j}{2} 
- b_{j-\frac 12}\frac{u_{j}+u_{j-1}}{2}  + \frac{h_{j+1}+h_j}{2} c_j u_j
\nn \\
&& \qquad \qquad = \; a_{j+\frac 12}\frac{u_{j+1}-u_{j}}{h_{j+1}}
- a_{j-\frac 12}\frac{u_{j}-u_{j-1}}{h_{j}}
+ \frac{h_{j+1}+h_j}{2} f_j  .
\label{fd-3}
\eey
Interestingly, it can be verified that this semi-dicrete scheme is equivalent to scheme (\ref{fd-2}). Thus,
(\ref{fd-3}) can be cast in the form (\ref{ode-3}) with property (\ref{cond-1}).

When a two-cell central finite difference approximation is used for the mesh movement related convection term,
the finite difference scheme becomes
\bey
&& \frac{h_{j+1}+h_j}{2} \dot{u}_j  - \frac{1}{2} \dot{x}_{j} (u_{j+1}-u_{j-1})
+ b_{j+\frac 12}\frac{u_{j+1}+u_j}{2} 
- b_{j-\frac 12}\frac{u_{j}+u_{j-1}}{2}  \nn \\
&&\qquad  + \; \frac{h_{j+1}+h_j}{2} c_j u_j = a_{j+\frac 12}\frac{u_{j+1}-u_{j}}{h_{j+1}}
- a_{j-\frac 12}\frac{u_{j}-u_{j-1}}{h_{j}} + \frac{h_{j+1}+h_j}{2} f_j  .
\label{fd-4}
\eey
It can be shown that
\bey
\V{u}^T (A + \sqrt{M} \frac{d }{d t} \sqrt{M}) \V{u} 
& = & - \sum_{j=1}^{{J_{max}}} \left [ \frac{a_{j-\frac 12}}{h_{j}} - \frac{(\dot{x}_j - \dot{x}_{j-1})}{4}\right ]  (u_j-u_{j-1})^2
\nn \\
&&  -\; \sum_{j=1}^{{J_{max}}-1} \frac{h_{j+1}+h_j}{2} u_j^2
 \left [ c_j + \frac{1}{2} \frac{(b_{j+\frac 12} - b_{j-\frac 12})}{(h_{j+1}+h_j)/2} \right ] .
\label{e3-5}
\eey
Thus, the right-hand side is nonpositive and therefore scheme (\ref{fd-4}) satisfies (\ref{cond-1}) if there hold
the conditions (\ref{coef-3}) and
\beq
(\dot{x}_j - \dot{x}_{j-1}) \le 4 \; \frac{a_{j-\frac 12}}{h_{j}}, \quad j = 1, ..., J_{max}-1 .
\label{cond-2}
\eeq
Note that both (\ref{coef-3}) and (\ref{cond-2}) can be satisfied when the mesh speed is bounded
and the mesh is sufficiently fine.

\section{2D convection-diffusion equations}
\label{SEC:2d}

In this section we study finite difference discretization of IBVP (\ref{ibvp}) in two dimensions.
The procedure is similar to that in the previous section for one dimension but
the derivation is more complicated for the current situation.

We assume that a curvilinear moving mesh, $\{ (x_{j,k}^n, y_{j,k}^n), j = 0, ..., J_{max}, k = 0, ..., K_{max}\}$,
is given for the physical domain $\Omega$ at time instants $t_0, ..., t_N$. As for the 1D case, we
consider the mesh to vary linearly on each time interval, i.e.,
\beq
x_{j,k}(t) = \frac{t_{n+1}-t}{\Delta t_n} x_{j,k}^n + \frac{t - t_n}{\Delta t_n} x_{j,k}^{n+1}, \quad
y_{j,k}(t) = \frac{t_{n+1}-t}{\Delta t_n} y_{j,k}^n + \frac{t - t_n}{\Delta t_n} y_{j,k}^{n+1} ,\quad
\forall t \in [t_n, t_{n+1}].
\label{mesh2d-1+1}
\eeq
Note that the mesh speed $(\dot{x}_{j,k},\dot{y}_{j,k})$ is constant on $[t_n, t_{n+1}]$.
We view the the moving mesh as the image of a Cartesian computational mesh
under  an invertible coordinate transformation $x = x(\xi, \eta, t), \;
y = y(\xi, \eta, t)$: $[0,J_{max}]\times [0,K_{max}] \to \Omega$, i.e.,
\beq
x_{j,k}(t) = x(\xi_j, \eta_k, t), \quad y_{j,k}(t) = y(\xi_j, \eta_k, t), \quad j = 0, ..., J_{max}, \; k = 0, ..., K_{max}
\label{mesh2d-1}
\eeq
where $\xi_j = j,\; j = 0,..., J_{max}$ and $\eta_k = k,\; k = 0, ..., K_{max}.$
Through the coordinate transformation, the PDE in (\ref{ibvp}) can be transformed (e.g., see \cite{HR11})
into a conservative form
\beq
J \dot{u} + u \dot{J} + \frac{\p q_1}{\p \xi} + \frac{\p q_2}{\p \eta}
+ c u J = \frac{\p p_1}{\p \xi} + \frac{\p p_2}{\p \eta} + J f ,
\label{e4-1}
\eeq
where
\beq
\begin{cases}
q_1 & =  u \left [ (J\xi_x) (b_1 - \dot{x}) + (J\xi_y)(b_2-\dot{y}) \right ] ,
\\
q_2 & =  u \left [ (J\eta_x) (b_1 - \dot{x}) + (J\eta_y)(b_2-\dot{y}) \right ] ,
\\
p_1 & =  \frac{a}{J}\left [(J\xi_x)^2 + (J\xi_y)^2\right ] \frac{\p u}{\p \xi}
+ \frac{a}{J} \left [(J\xi_x) (J\eta_x) + (J\xi_y) (J\eta_y) \right ] \frac{\p u}{\p \eta},
\\
p_2 & =  \frac{a}{J} \left [(J\xi_x) (J\eta_x) + (J\xi_y) (J\eta_y) \right ]\frac{\p u}{\p \xi}
+ \frac{a}{J} \left [(J\eta_x)^2+ (J\eta_y)^2 \right ] \frac{\p u}{\p \eta} .
\end{cases}
\label{e4-2}
\eeq
Here, $\V{b} = (b_1, b_2)^T$, $J$ is the Jacobian of the coordinate transformation, and $\V{q} = (q_1, q_2)^T$
and $\V{p} = (p_1, p_2)^T$ are the convection and diffusion fluxes, respectively.
Moreover, we have the transformation identities
\beq
\begin{cases}
J =  x_\xi y_\eta - x_\eta y_\xi = (J\xi_x) (J \eta_y) - (J \xi_y) (J \eta_x),\\
(J\xi_x) = y_\eta,\quad (J\xi_y) = - x_\eta,\quad
(J \eta_x) = - y_\xi,\quad (J \eta_y) = x_\xi .
\end{cases}
\label{e4-3}
\eeq

We consider a central finite difference discretization for (\ref{e4-1}). The scheme reads as
\bey
&& J_{j,k} \dot{u}_{j,k} + \dot{J}_{j,k} u_{j,k}
+ \left (q_{1,j+\frac 1 2,k} - q_{1,j-\frac 1 2,k}\right )
+ \left (q_{2,j,k+\frac 1 2} - q_{2,j,k-\frac 1 2}\right )
+ c_{j,k} u_{j,k} J_{j,k}
\nn \\
&& \qquad = \; \frac{1}{2} \left (p_{1,j+\frac 1 2,k+\frac 1 2} - p_{1,j-\frac 1 2,k+\frac 1 2}
+p_{1,j+\frac 1 2,k-\frac 1 2} - p_{1,j-\frac 1 2,k-\frac 1 2}\right )
\nn \\
&& \qquad \qquad +\; \frac{1}{2} \left (p_{2,j+\frac 1 2,k+\frac 1 2} - p_{2,j+\frac 1 2,k-\frac 1 2}
+p_{2,j-\frac 1 2,k+\frac 1 2} - p_{2,j-\frac 1 2,k-\frac 1 2}\right ) + f_{j,k} J_{j,k},
\label{fd-2d-1}
\eey
where the convection fluxes $q_1$ and $q_2$
are approximated at integer-half and half-integer points and
the diffusion fluxes are approximated at half-half points; i.e.,
\bey
q_{1, j-\frac 1 2,k} & = & \frac{u_{j,k}+u_{j-1,k}}{2} \left [ (J \xi_x)_{j-\frac 1 2,k} (b_{1, j-\frac 1 2,k}-\dot{x}_{j-\frac 1 2,k})
\frac{}{} \right .
\nn \\
&& \qquad \quad \qquad \left. \frac{}{}  + (J \xi_y)_{j-\frac 1 2,k} (b_{2, j-\frac 1 2,k}-\dot{y}_{j-\frac 1 2,k}) \right ],
\label{e4-5}
\\
q_{2, j,k-\frac 1 2} & = & \frac{u_{j,k}+u_{j,k-1}}{2} \left [ (J \eta_x)_{j,k-\frac 1 2} (b_{1, j,k-\frac 1 2}-\dot{x}_{j,k-\frac 1 2})
\frac{}{} \right .
\nn \\
&& \qquad \quad \qquad \left. \frac{}{} 
+ (J \eta_y)_{j,k-\frac 1 2} (b_{2, j,k-\frac 1 2}-\dot{y}_{j,k-\frac 1 2}) \right ],
\label{e4-6}
\\
p_{1, j-\frac{1}{2},k-\frac{1}{2}} & = & \frac{a_{j-\frac{1}{2},k-\frac{1}{2}}}{2 J_{j-\frac{1}{2},k-\frac{1}{2}}}
\left [ (J\xi_x)^2 + (J\xi_y)^2\right ]_{j-\frac{1}{2},k-\frac{1}{2}}
\left (u_{j,k}-u_{j-1,k}+u_{j,k-1}-u_{j-1,k-1}\right )
\nn \\
& + & \frac{a_{j-\frac{1}{2},k-\frac{1}{2}}}{2 J_{j-\frac{1}{2},k-\frac{1}{2}}}
\left [ (J\xi_x) (J\eta_x) + (J\xi_y)(J\eta_y)\right ]_{j-\frac{1}{2},k-\frac{1}{2}}
\nn \\
&& \qquad \qquad \qquad \times
\left (u_{j,k}-u_{j,k-1}+u_{j-1,k}-u_{j-1,k-1}\right ),
\qquad
\label{e4-6+1}
\\
p_{2, j-\frac{1}{2},k-\frac{1}{2}} & = & \frac{a_{j-\frac{1}{2},k-\frac{1}{2}}}{2 J_{j-\frac{1}{2},k-\frac{1}{2}}}
\left [ (J\xi_x) (J\eta_x) + (J\xi_y)(J\eta_y)\right ]_{j-\frac{1}{2},k-\frac{1}{2}}
\nn \\
&& \qquad \qquad \qquad \times
\left (u_{j,k}-u_{j-1,k}+u_{j,k-1}-u_{j-1,k-1}\right )
\nn \\
& + & \frac{a_{j-\frac{1}{2},k-\frac{1}{2}}}{2 J_{j-\frac{1}{2},k-\frac{1}{2}}}
\left [ (J\eta_x)^2 + (J\eta_y)^2\right ]_{j-\frac{1}{2},k-\frac{1}{2}}
\left (u_{j,k}-u_{j,k-1}+u_{j-1,k}-u_{j-1,k-1}\right ) .
\qquad
\label{e4-6+2}
\eey
Here, the transformation quantities (used in the approximations of the convection fluxes)
\beq
(J \xi_x)_{j-\frac 1 2,k},\quad (J \xi_y)_{j-\frac 1 2,k},\quad (J \eta_x)_{j,k-\frac 1 2},\quad 
(J \eta_y)_{j,k-\frac 1 2},\quad \dot{x}_{j-\frac 1 2,k},\quad \dot{x}_{j,k-\frac 1 2},\quad \dot{y}_{j-\frac 1 2,k},\quad
\dot{y}_{j,k-\frac 1 2},
\label{e4-6+3}
\eeq
are to be defined so that condition (\ref{cond-1}) is satisfied while the other quantities
(used in the approximations of the diffusion fluxes)
\beq
(J \xi_x)_{j-\frac 1 2,k-\frac 1 2},\quad (J \xi_y)_{j-\frac 1 2,k-\frac 1 2},\quad
(J \eta_x)_{j-\frac 1 2,k-\frac 1 2},\quad (J \eta_y)_{j-\frac 1 2,k-\frac 1 2},\quad J_{j-\frac{1}{2},k-\frac{1}{2}},
\label{e4-6+4}
\eeq
are approximated using central finite differences based on relation (\ref{e4-3}). For example,
\[
(J \xi_x)_{j-\frac 1 2,k-\frac 1 2} = (y_\eta)_{j-\frac 1 2,k-\frac 1 2}
\approx \frac 1 2 (y_{j, k} - y_{j,k-1} + y_{j-1, k} - y_{j-1, k-1}) .
\]
The discretization of the boundary condition is
\beq
u_{j, k} = g(x_{j,k}(t), y_{j,k}(t), t), \quad \forall (j,k) \mbox{ with } j = 0, \; j = J_{max},\; k = 0, \mbox{ or }k = K_{max}.
\label{fd-2d-2}
\eeq
Let $\V{u} = \{ u_{j,k} \}$. The above scheme can then be cast in the form (\ref{ode-3}) with
\bey
(M \V{u})_{(j,k)} & = & J_{j,k} \dot{u}_{j,k} ,
\label{fd-2d-3}
\\
(A \V{u})_{(j,k)} & = & - \dot{J}_{j,k} u_{j,k}
- \left (q_{1,j+\frac 1 2,k} - q_{1,j-\frac 1 2,k}\right ) - \left (q_{2,j,k+\frac 1 2} - q_{2,j,k-\frac 1 2}\right )
- c_{j,k} u_{j,k} J_{j,k}
\nn \\
& & + \; \frac{1}{2} \left (f_{1,j+\frac 1 2,k+\frac 1 2} - f_{1,j-\frac 1 2,k+\frac 1 2}+f_{1,j+\frac 1 2,k-\frac 1 2} - f_{1,j-\frac 1 2,k-\frac 1 2}\right )
\nn \\
&& +\; \frac{1}{2} \left (f_{2,j+\frac 1 2,k+\frac 1 2} - f_{2,j+\frac 1 2,k-\frac 1 2}+f_{2,j-\frac 1 2,k+\frac 1 2} - f_{2,j-\frac 1 2,k-\frac 1 2}\right ) .
\label{fd-2d-4}
\eey
Note that the mass matrix is diagonal, with the diagonal entries being $J_{j,k}$.

\begin{thm}
\label{thm4.1}
Assume that for $j = 1,...,J_{max}-1$ and $k = 1, ..., K_{max}-1$, there hold
\bey
J_{j,k} c_{j,k}
& + &  \frac{1}{2} \left [ 
(J\xi_x)_{j+\frac 1 2,k} b_{1,j+\frac 1 2,k} - (J\xi_x)_{j-\frac 1 2,k} b_{1,j-\frac 1 2,k} 
+ (J\xi_y)_{j+\frac 1 2,k} b_{2,j+\frac 1 2,k} - (J\xi_y)_{j-\frac 1 2,k} b_{2,j-\frac 1 2,k} \right ]
\nn \\
& + & \frac{1}{2} \left [ (J\eta_x)_{j,k+\frac 1 2} b_{1,j,k+\frac 1 2}
- (J\eta_x)_{j,k-\frac 1 2} b_{1,j,k-\frac 1 2} + (J\eta_y)_{j,k+\frac 1 2} b_{2,j,k+\frac 1 2}
- (J\eta_y)_{j,k-\frac 1 2} b_{2,j,k-\frac 1 2} \right ] 
\nn \\
& \ge & 0 ,
\label{coef-4}
\\
\dot{J}_{j,k} & = &\quad  
 (J\xi_x)_{j+\frac 1 2,k} \dot{x}_{j+\frac 1 2,k} - (J\xi_x)_{j-\frac 1 2,k} \dot{x}_{j-\frac 1 2,k} 
 +(J\xi_y)_{j+\frac 1 2,k} \dot{y}_{j+\frac 1 2,k} - (J\xi_y)_{j-\frac 1 2,k} \dot{y}_{j-\frac 1 2,k} 
\nn \\
&& +\; (J\eta_x)_{j,k+\frac 1 2} \dot{x}_{j,k+\frac 1 2} - (J\eta_x)_{j,k-\frac 1 2} \dot{x}_{j,k-\frac 1 2}
+ (J\eta_y)_{j,k+\frac 1 2} \dot{y}_{j,k+\frac 1 2} - (J\eta_y)_{j,k-\frac 1 2} \dot{y}_{j,k-\frac 1 2} .
\label{gcl-1}
\eey
Then, the finite difference scheme (\ref{fd-2d-1}) satisfies the condition (\ref{cond-1}).
 As a consequence, the fully discrete scheme resulting from
the application of the time integration method (\ref{col-1}) to (\ref{ode-3})
with $M$ and $A$ given in (\ref{fd-2d-3}) and (\ref{fd-2d-4})
is unconditionally stable and of order $(2m)$ in time and order 2 in space.
\end{thm}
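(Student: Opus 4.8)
The plan is to mirror the proof of Theorem~\ref{thm3.1}. As noted in \S2 (and in the paragraph preceding (\ref{si-1})), it suffices to treat the homogeneous problem $f\equiv 0$, $g\equiv 0$, so that $\V{u}$ carries zero boundary values by (\ref{fd-2d-2}). Since the mass matrix in (\ref{fd-2d-3}) is diagonal with entries $J_{j,k}>0$ (invertibility of the coordinate transformation), $\sqrt{M}$ is diagonal with entries $\sqrt{J_{j,k}}$ and hence $\sqrt{M}\,\frac{d}{dt}\sqrt{M}=\mbox{diag}\!\left(\frac12\dot J_{j,k}\right)$. The goal is therefore to verify that $\V{u}^T\!\left(A+\sqrt{M}\frac{d}{dt}\sqrt{M}\right)\!\V{u}\le 0$, i.e.\ (\ref{cond-1}). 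Using (\ref{fd-2d-4}) I would split $\V{u}^TA\V{u}$ into the diffusion contribution (the flux terms $p_1,p_2$ on the right-hand side of (\ref{fd-2d-1})), the convection contribution $-\sum_{j,k}u_{j,k}\big[(q_{1,j+\frac12,k}-q_{1,j-\frac12,k})+(q_{2,j,k+\frac12}-q_{2,j,k-\frac12})\big]$, the reaction term $-\sum_{j,k}c_{j,k}J_{j,k}u_{j,k}^2$, and the metric term $-\sum_{j,k}\dot J_{j,k}u_{j,k}^2$, and handle them separately.

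For the diffusion part I would perform a two-dimensional discrete integration by parts: in $\sum_{j,k}u_{j,k}$ times the diffusion part of the right-hand side of (\ref{fd-2d-1}), each half-half-point flux $p_{1,j-\frac12,k-\frac12}$, $p_{2,j-\frac12,k-\frac12}$ enters exactly four of the node stencils, and regrouping shows that $p_{1,j-\frac12,k-\frac12}$ is multiplied by $-\frac12(u_{j,k}-u_{j-1,k}+u_{j,k-1}-u_{j-1,k-1})$ and $p_{2,j-\frac12,k-\frac12}$ by $-\frac12(u_{j,k}-u_{j,k-1}+u_{j-1,k}-u_{j-1,k-1})$, with the boundary contributions cancelling by (\ref{fd-2d-2}). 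Substituting the definitions (\ref{e4-6+1})--(\ref{e4-6+2}), the diffusion contribution collapses to $-\frac14\sum_{\text{cells}}\frac{a}{J}\,\V{w}^T G\,\V{w}$ evaluated at the cell centers $(j-\frac12,k-\frac12)$, where $\V{w}=\big(u_{j,k}-u_{j-1,k}+u_{j,k-1}-u_{j-1,k-1},\; u_{j,k}-u_{j,k-1}+u_{j-1,k}-u_{j-1,k-1}\big)^T$ and $G$ is the $2\times2$ matrix with entries $(J\xi_x)^2+(J\xi_y)^2$, $(J\xi_x)(J\eta_x)+(J\xi_y)(J\eta_y)$, $(J\eta_x)^2+(J\eta_y)^2$. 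Since $G$ is the Gram matrix of the vectors $(J\xi_x,J\xi_y)$ and $(J\eta_x,J\eta_y)$ it is positive semi-definite, and with $a>0$, $J_{j-\frac12,k-\frac12}>0$ this whole contribution is $\le 0$.

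For the convection part I would use the half-point averaging (\ref{e4-5})--(\ref{e4-6}). Writing $\beta_{1,j-\frac12,k}=(J\xi_x)_{j-\frac12,k}(b_{1,j-\frac12,k}-\dot x_{j-\frac12,k})+(J\xi_y)_{j-\frac12,k}(b_{2,j-\frac12,k}-\dot y_{j-\frac12,k})$ and $\beta_{2,j,k-\frac12}$ analogously, the sum $\sum_{j}u_{j,k}(q_{1,j+\frac12,k}-q_{1,j-\frac12,k})$ has, for each fixed $k$, exactly the one-dimensional structure already summed by parts in (\ref{e3-3}), so it equals $\frac12\sum_{j}(\beta_{1,j+\frac12,k}-\beta_{1,j-\frac12,k})u_{j,k}^2$; likewise for $q_2$ in the $\eta$-direction. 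Splitting each $\beta$ into its $\V{b}$-part $\beta^b$ (drop the mesh speed) and its mesh-speed part $\beta^{\dot x}$, the $\beta^b$ contribution together with the reaction term $-\sum_{j,k}c_{j,k}J_{j,k}u_{j,k}^2$ collapses to $-\sum_{j,k}u_{j,k}^2$ times precisely the left-hand side of (\ref{coef-4}), hence is $\le 0$ by assumption; and the mesh-speed contribution is $+\frac12\sum_{j,k}u_{j,k}^2$ times precisely the right-hand side of the discrete geometric conservation law (\ref{gcl-1}), i.e.\ $+\frac12\sum_{j,k}\dot J_{j,k}u_{j,k}^2$. Adding the term $+\frac12\sum_{j,k}\dot J_{j,k}u_{j,k}^2$ from $\sqrt{M}\frac{d}{dt}\sqrt{M}$ and the metric term $-\sum_{j,k}\dot J_{j,k}u_{j,k}^2$ from $A$, the three $\dot J$-sums cancel exactly, leaving $\V{u}^T(A+\sqrt{M}\frac{d}{dt}\sqrt{M})\V{u}$ equal to the nonpositive diffusion sum minus $\sum_{j,k}u_{j,k}^2\cdot[\text{LHS of }(\ref{coef-4})]$, which is $\le 0$; this is (\ref{cond-1}). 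The unconditional stability and the order-$2m$-in-time claim then follow from the collocation analysis of \S2 applied to (\ref{ode-3}) with $M$, $A$ from (\ref{fd-2d-3})--(\ref{fd-2d-4}), and the order-$2$-in-space claim from the central differencing in (\ref{fd-2d-1}).

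The main obstacle is the bookkeeping in the two-dimensional diffusion summation by parts: one must track carefully which of the four node stencils each half-half-point flux enters and with which sign and weight, so that the mixed ($\xi$-$\eta$) couplings assemble into the symmetric Gram form $\V{w}^TG\V{w}$ rather than an indefinite expression, and one must check that the discrete metrics in (\ref{e4-6+4}), defined via (\ref{e4-3}), yield a positive $J_{j-\frac12,k-\frac12}$ on a non-degenerate mesh. The convection and reaction bookkeeping is a direct two-dimensional replica of the one-dimensional computation (\ref{e3-2})--(\ref{e3-3}) and should go through routinely once the notation $\beta_1,\beta_2$ is set up; the only subtlety there is the exact matching of the mesh-speed terms with the chosen discrete form (\ref{gcl-1}) of the geometric conservation law, which is built into the hypothesis.
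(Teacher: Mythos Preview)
Your proposal is correct and follows essentially the same route as the paper's proof: both reduce to the homogeneous case, split $\V{u}^T(A+\sqrt{M}\frac{d}{dt}\sqrt{M})\V{u}$ into diffusion, convection, reaction, and $\dot J$ metric pieces, handle the diffusion part by a cell-based summation by parts into a nonpositive quadratic form, reduce the convection part via the one-dimensional identity (\ref{e3-3}) applied in each direction, and then observe that the $\dot J$ terms cancel exactly under (\ref{gcl-1}) while the $\V{b}$-part combines with the reaction term into the left-hand side of (\ref{coef-4}). The only cosmetic difference is that you phrase the diffusion nonpositivity via the positive semi-definiteness of the Gram matrix $G$ of $((J\xi_x),(J\xi_y))$ and $((J\eta_x),(J\eta_y))$, whereas the paper writes out the equivalent explicit sum of squares $[(J\xi_x)w_1+(J\eta_x)w_2]^2+[(J\xi_y)w_1+(J\eta_y)w_2]^2$; expanding the latter gives precisely $\V{w}^TG\V{w}$.
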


\begin{proof}
Once again, we take $f \equiv 0$ and $g \equiv 0$ for stability analysis.
From (\ref{fd-2d-2}), (\ref{fd-2d-3}), and (\ref{fd-2d-4}),  we have 
\bey
&& \V{u}^T (A + \sqrt{M} \frac{d }{d t} \sqrt{M} ) \V{u}
\nn \\
& = & - \frac{1}{2} \sum_{j=1}^{{J_{max}}-1} \sum_{k=1}^{{K_{max}}-1} \dot{J}_{j,k} u_{j,k}^2
- \sum_{j=1}^{{J_{max}}-1} \sum_{k=1}^{{K_{max}}-1} c_{j,k}  J_{j,k} u_{j,k}^2
\nn \\
&& - \sum_{j=1}^{{J_{max}}-1}\sum_{k=1}^{{K_{max}}-1} u_{j,k}
\left [ \left (q_{1,j+\frac 1 2,k} - q_{1,j-\frac 1 2,k}\right ) + \left (q_{2,j,k+\frac 1 2} - q_{2,j,k-\frac 1 2}\right )\right ]
\nn \\
&& + \frac{1}{2} \sum_{j=1}^{{J_{max}}-1}\sum_{k=1}^{{K_{max}}-1} u_{j,k}
\left (f_{1,j+\frac 1 2,k+\frac 1 2} - f_{1,j-\frac 1 2,k+\frac 1 2}+f_{1,j+\frac 1 2,k-\frac 1 2} - f_{1,j-\frac 1 2,k-\frac 1 2}\right )
\nn \\
&& + \frac{1}{2} \sum_{j=1}^{{J_{max}}-1}\sum_{k=1}^{{K_{max}}-1} u_{j,k}
\left (f_{2,j+\frac 1 2,k+\frac 1 2} - f_{2,j+\frac 1 2,k-\frac 1 2}+f_{2,j-\frac 1 2,k+\frac 1 2} - f_{2,j-\frac 1 2,k-\frac 1 2}\right ) 
\nn \\
& = & - \frac{1}{2} \sum_{j=1}^{{J_{max}}-1} \sum_{k=1}^{{K_{max}}-1} \dot{J}_{j,k} u_{j,k}^2
- \sum_{j=1}^{{J_{max}}-1} \sum_{k=1}^{{K_{max}}-1} c_{j,k}  J_{j,k} u_{j,k}^2
\nn \\
&& - \sum_{j=1}^{{J_{max}}}\sum_{k=1}^{{K_{max}}-1} q_{1,j-\frac 1 2,k} \left (u_{j-1,k} - u_{j,k}\right )
- \sum_{j=1}^{{J_{max-1}}}\sum_{k=1}^{{K_{max}}} q_{2,j,k-\frac 1 2} \left (u_{j,k-1} - u_{j,k}\right )
\nn \\
&&  - \frac{1}{2} \sum_{j=1}^{{J_{max}}} \sum_{k=1}^{{K_{max}}} f_{1,j-\frac 1 2,k-\frac 1 2} 
(u_{j,k}-u_{j-1,k} + u_{j,k-1} - u_{j-1,k-1})
\nn \\
&& - \frac{1}{2}  \sum_{j=1}^{{J_{max}}} \sum_{k=1}^{{K_{max}}} f_{2,j-\frac 1 2,k-\frac 1 2} 
(u_{j,k}-u_{j,k-1} + u_{j-1,k} - u_{j-1,k-1}).
\label{e4-4}
\eey
From (\ref{e4-5}) and (\ref{e4-6}), the convection-related terms (the third and fourth terms on the right-hand
side of the above equality) become
\bey
&& \sum_{j=1}^{{J_{max}}}\sum_{k=1}^{{K_{max}}-1} q_{1,j-\frac 1 2,k} \left (u_{j-1,k} - u_{j,k}\right )
\nn \\
& & \quad = \; \frac{1}{2} \sum_{j=1}^{{J_{max}}-1}\sum_{k=1}^{{K_{max}}-1} u_{j,k}^2
\left [ \frac{}{} \right . (J \xi_x)_{j+\frac 1 2,k} (b_{1, j+\frac 1 2,k}-\dot{x}_{j+\frac 1 2,k})
\nn \\
&& \qquad + (J \xi_y)_{j+\frac 1 2,k} (b_{2, j+\frac 1 2,k}-\dot{y}_{j+\frac 1 2,k}) - (J \xi_x)_{j-\frac 1 2,k} (b_{1, j-\frac 1 2,k}-\dot{x}_{j-\frac 1 2,k})
\nn \\
&& \qquad  - (J \xi_y)_{j-\frac 1 2,k} (b_{2, j-\frac 1 2,k}-\dot{y}_{j-\frac 1 2,k}) \left. \frac{}{} \right ],
\label{e4-7}
\eey
and
\bey
&& \sum_{j=1}^{{J_{max-1}}}\sum_{k=1}^{{K_{max}}} q_{2,j,k-\frac 1 2} \left (u_{j,k-1} - u_{j,k}\right )
\nn \\
& & \quad = \; \frac{1}{2} \sum_{j=1}^{{J_{max}}-1}\sum_{k=1}^{{K_{max}}-1} u_{j,k}^2
\left [ \frac{}{} \right . (J \eta_x)_{j,k+\frac 1 2} (b_{1, j,k+\frac 1 2}-\dot{x}_{j,k+\frac 1 2})
\nn \\
&& \qquad + (J \eta_y)_{j,k+\frac 1 2} (b_{2, j,k+\frac 1 2}-\dot{y}_{j,k+\frac 1 2}) - (J \eta_x)_{j,k-\frac 1 2} (b_{1, j,k-\frac 1 2}-\dot{x}_{j,k-\frac 1 2})
\nn \\
&& \qquad  - (J \eta_y)_{j,k-\frac 1 2} (b_{2, j,k-\frac 1 2}-\dot{y}_{j,k-\frac 1 2}) \left. \frac{}{} \right ] .
\label{e4-8}
\eey
Moreover, from (\ref{e4-6+1}) and (\ref{e4-6+2}) the diffusion terms in (\ref{e4-4}) can be written as
\bey
&&  - \frac{1}{2} \sum_{j=1}^{{J_{max}}} \sum_{k=1}^{{K_{max}}} f_{1,j-\frac 1 2,k-\frac 1 2} 
(u_{j,k}-u_{j-1,k} + u_{j,k-1} - u_{j-1,k-1})
\nn \\
&& \;\; - \frac{1}{2}  \sum_{j=1}^{{J_{max}}} \sum_{k=1}^{{K_{max}}} f_{2,j-\frac 1 2,k-\frac 1 2} 
(u_{j,k}-u_{j,k-1} + u_{j-1,k} - u_{j-1,k-1})
\nn \\
& & \quad = - \; \frac{1}{4} \sum_{j=1}^{{J_{max}}} \sum_{k=1}^{{K_{max}}} \frac{a_{j-\frac 1 2,k-\frac 1 2}}{J_{j-\frac 1 2,k-\frac 1 2}}
\left [ (J\xi_x)_{j-\frac 1 2,k-\frac 1 2} (u_{j,k}-u_{j-1,k} + u_{j,k-1} - u_{j-1,k-1})\frac{}{} \right .
\nn \\
&& \qquad \qquad \qquad \qquad \qquad \left. \frac{}{}
+ (J\eta_x)_{j-\frac 1 2,k-\frac 1 2} (u_{j,k}-u_{j,k-1} + u_{j-1,k} - u_{j-1,k-1})\right ]^2
\nn \\
&& \quad \quad - \; \frac{1}{4} \sum_{j=1}^{{J_{max}}} \sum_{k=1}^{{K_{max}}} \frac{a_{j-\frac 1 2,k-\frac 1 2}}{J_{j-\frac 1 2,k-\frac 1 2}}
\left [ (J\xi_y)_{j-\frac 1 2,k-\frac 1 2} (u_{j,k}-u_{j-1,k} + u_{j,k-1} - u_{j-1,k-1})\frac{}{} \right .
\nn \\
&& \qquad \qquad \qquad \qquad \qquad \left. \frac{}{}
+ (J\eta_y)_{j-\frac 1 2,k-\frac 1 2} (u_{j,k}-u_{j,k-1} + u_{j-1,k} - u_{j-1,k-1})\right ]^2 .
\label{e4-9}
\eey
Combining (\ref{e4-7})--(\ref{e4-9}) with (\ref{e4-4}), we obtain
\bey
&& \V{u}^T (A + \sqrt{M} \frac{d }{d t} \sqrt{M} ) \V{u}
\nn \\
& = & - \; \frac{1}{4} \sum_{j=1}^{{J_{max}}} \sum_{k=1}^{{K_{max}}} \frac{a_{j-\frac 1 2,k-\frac 1 2}}{J_{j-\frac 1 2,k-\frac 1 2}}
\left [ (J\xi_x)_{j-\frac 1 2,k-\frac 1 2} (u_{j,k}-u_{j-1,k} + u_{j,k-1} - u_{j-1,k-1})\frac{}{} \right .
\nn \\
&& \qquad \qquad \qquad \qquad \qquad \left. \frac{}{}
+ (J\eta_x)_{j-\frac 1 2,k-\frac 1 2} (u_{j,k}-u_{j,k-1} + u_{j-1,k} - u_{j-1,k-1})\right ]^2
\nn \\
&&  - \; \frac{1}{4} \sum_{j=1}^{{J_{max}}} \sum_{k=1}^{{K_{max}}} \frac{a_{j-\frac 1 2,k-\frac 1 2}}{J_{j-\frac 1 2,k-\frac 1 2}}
\left [ (J\xi_y)_{j-\frac 1 2,k-\frac 1 2} (u_{j,k}-u_{j-1,k} + u_{j,k-1} - u_{j-1,k-1})\frac{}{} \right .
\nn \\
&& \qquad \qquad \qquad \qquad \qquad \left. \frac{}{}
+ (J\eta_y)_{j-\frac 1 2,k-\frac 1 2} (u_{j,k}-u_{j,k-1} + u_{j-1,k} - u_{j-1,k-1})\right ]^2
\nn \\
& & - \frac{1}{2}\sum_{j=1}^{{J_{max}}-1}\sum_{k=1}^{{K_{max}}-1} u_{j,k}^2 \left [ \dot{J}_{j,k}
- (J\xi_x)_{j+\frac 1 2,k} \dot{x}_{j+\frac 1 2,k} + (J\xi_x)_{j-\frac 1 2,k} \dot{x}_{j-\frac 1 2,k} \frac{}{} \right.
\nn \\
&& \qquad \qquad - (J\xi_y)_{j+\frac 1 2,k} \dot{y}_{j+\frac 1 2,k} + (J\xi_y)_{j-\frac 1 2,k} \dot{y}_{j-\frac 1 2,k}
- (J\eta_x)_{j,k+\frac 1 2} \dot{x}_{j,k+\frac 1 2} \frac{}{}
\nn \\
&& \qquad \qquad \left. + (J\eta_x)_{j,k-\frac 1 2} \dot{x}_{j,k-\frac 1 2} - (J\eta_y)_{j,k+\frac 1 2} \dot{y}_{j,k+\frac 1 2}
+ (J\eta_y)_{j,k-\frac 1 2} \dot{y}_{j,k-\frac 1 2} \frac{}{} \right ] 
\nn \\
&& - \frac{1}{2}\sum_{j=1}^{{J_{max}}-1}\sum_{k=1}^{{K_{max}}-1} u_{j,k}^2 \left [ 2 c_{j,k} J_{j,k}
+ (J\xi_x)_{j+\frac 1 2,k} b_{1,j+\frac 1 2,k} - (J\xi_x)_{j-\frac 1 2,k} b_{1,j-\frac 1 2,k} \frac{}{} \right.
\nn \\
&& \qquad \qquad + (J\xi_y)_{j+\frac 1 2,k} b_{2,j+\frac 1 2,k} - (J\xi_y)_{j-\frac 1 2,k} b_{2,j-\frac 1 2,k}
+ (J\eta_x)_{j,k+\frac 1 2} b_{1,j,k+\frac 1 2} \frac{}{}
\nn \\
&& \qquad  \qquad \left. - (J\eta_x)_{j,k-\frac 1 2} b_{1,j,k-\frac 1 2} + (J\eta_y)_{j,k+\frac 1 2} b_{2,j,k+\frac 1 2}
- (J\eta_y)_{j,k-\frac 1 2} b_{2,j,k-\frac 1 2} \frac{}{} \right ] . 
\label{e4-10}
\eey
The right-hand side of the above inequality is nonpositive and therefore
scheme (\ref{fd-2d-1}) satisfies the condition (\ref{cond-1}) if (\ref{coef-4}) and (\ref{gcl-1}) hold.
\end{proof}

\begin{rem}
\label{rem4.1}
The condition (\ref{coef-4}) is a central finite difference approximation to
the condition (\ref{coef-2}) which takes the form in the new coordinates $\xi$ and $\eta$ as
\[
J c + \frac{1}{2} \frac{\partial}{\partial \xi} \left [ (J\xi_x) b_1 \frac{}{} - (J\xi_y) b_2 \right ]
+ \frac{1}{2} \frac{\partial}{\partial \eta} \left [ (J\eta_x) b_1 \frac{}{} + (J\eta_y) b_2 \right ] \ge 0.
\]
As mentioned in Remark~\ref{rem3.1}, (\ref{coef-4}) holds when $\V{b} = (b_1, b_2)^T$ is constant
and $c$ is nonnegative. For the general situation with variable $\V{b}$, it is reasonable to expect
the condition to hold provided that (\ref{coef-2}) holds and the mesh is sufficiently fine.
\hfill \qed
\end{rem}

\begin{rem}
\label{rem4.2}
The condition (\ref{gcl-1}) is new in multidimensions and often referred to as
a geometric conservation law (GCL) in the literature. As a matter of fact, it is a central finite
difference approximation of the continuous identity
\beq
\dot{J} = \dot{\overline{(J\xi_x)}} (J \eta_y) + (J\xi_x)\dot{\overline{ (J \eta_y)}}
 - \dot{\overline{ (J \xi_y)}} (J \eta_x) - (J \xi_y) \dot{\overline{(J \eta_x)}},
\label{gcl-2}
\eeq
where the symbol ``$\dot{\overline{\frac{}{}\;\frac{}{}}}$'' denotes the time derivative of a product.
The importance of satisfying GCLs by numerical algorithms has been extensively studied;
e.g., see \cite{BG04,FN99,FN04,Hin82,TL79}. Generally speaking, there are
two approaches to enforce GCL (\ref{gcl-1}). The first one,
proposed by Thomas and Lombard \cite{TL79},
is to consider $J_{j,k}$ as an unknown variable and update it by integrating (\ref{gcl-1})
using the same scheme as for the underlying PDE. The main advantage of this approach
is that the transformation quantities in (\ref{gcl-1}) (also see (\ref{e4-6+3}))
can be approximated using arbitrary finite differences.

The other approach is to choose proper approximations for
those transformation quantities in (\ref{e4-6+3}) such that (\ref{gcl-1}) is satisfied automatically.
An example set of such approximations is given by
\beq
\begin{cases}
\dot{x}_{j- \frac 1 2,k} & \equiv  \frac{1}{8} \left [\dot{x}_{j,k-1}+\dot{x}_{j- 1,k-1}+2 \dot{x}_{j,k}+ 2 \dot{x}_{j- 1,k}
+ \dot{x}_{j,k+1}+\dot{x}_{j- 1,k+1} \right ] ,
\\
\dot{x}_{j,k- \frac 1 2} & \equiv  \frac{1}{8} \left [\dot{x}_{j-1,k}+\dot{x}_{j-1,k- 1}+2 \dot{x}_{j,k}+ 2 \dot{x}_{j,k- 1}
+ \dot{x}_{j+1,k}+\dot{x}_{j+1,k- 1} \right ],
\\
\dot{y}_{j- \frac 1 2,k} & \equiv  \frac{1}{8} \left [\dot{y}_{j,k-1}+\dot{y}_{j- 1,k-1}+2 \dot{y}_{j,k}+ 2 \dot{y}_{j- 1,k}
+ \dot{y}_{j,k+1}+\dot{y}_{j- 1,k+1} \right ],
\\
\dot{y}_{j,k- \frac 1 2} & \equiv  \frac{1}{8} \left [\dot{y}_{j-1,k}+\dot{y}_{j-1,k- 1}+2 \dot{y}_{j,k}+ 2 \dot{y}_{j,k- 1}
+ \dot{y}_{j+1,k}+\dot{y}_{j+1,k- 1} \right ] ,
\end{cases}
\label{e4-11}
\eeq
\beq
\begin{cases}
(J \xi_x)_{j- \frac 1 2,k} & =  (y_\eta)_{j- \frac 1 2,k} \equiv \frac{1}{4} [ y_{j,k+1}-y_{j,k-1} + y_{j- 1,k+1}-y_{j- 1,k-1}],
\\
(J\xi_y)_{j- \frac 1 2,k} & =  - (x_\eta)_{j- \frac 1 2,k} \equiv - \frac{1}{4}[ x_{j,k+1}-x_{j,k-1}+x_{j- 1, k+1}-x_{j- 1, k-1}] ,
\\
(J\eta_x)_{j,k- \frac 1 2} & =  - (y_\xi)_{j,k- \frac 1 2} \equiv - \frac{1}{4}[ y_{j+1,k}-y_{j-1,k} + y_{j+1,k- 1}-y_{j-1,k- 1}],
\\
(J\eta_y)_{j,k- \frac 1 2} & =  (x_\xi)_{j,k- \frac 1 2} \equiv \frac{1}{4} [ x_{j+1,k}-x_{j-1,k}+x_{j+1,k- 1}-x_{j-1,k- 1}] ,
\label{e4-12}
\end{cases}
\eeq
\bey
J_{j,k} & = & [(J\xi_x) (J \eta_y) - (J \xi_y) (J \eta_x)]_{j,k}
\nn \\
& \equiv & \frac{1}{4} [(J\xi_x)_{j+\frac 1 2,k} + (J\xi_x)_{j-\frac 1 2,k}] \; [(J\eta_y)_{j,k+\frac 1 2} + (J\eta_y)_{j,k-\frac 1 2} ]
\nn \\
&& -\; \frac{1}{4} [(J\xi_y)_{j+\frac 1 2,k}  + (J\xi_y)_{j-\frac 1 2,k} ]\;  [(J\eta_x)_{j,k+\frac 1 2}  + (J\eta_x)_{j,k-\frac 1 2}] . 
\label{e4-13}
\eey
To show that (\ref{gcl-1}) is satisfied, using the identity
\[
ac - bd = \frac{1}{2} (a-b)(c+d) + \frac{1}{2} (a+b) (c-d),
\]
we can rewrite (\ref{gcl-1}) into
\bey
\dot{J}_{j,k} & = & 
\frac{1}{2} [(J\xi_x)_{j+\frac 1 2,k} - (J\xi_x)_{j-\frac 1 2,k}] (\dot{x}_{j+\frac 1 2,k} + \dot{x}_{j-\frac 1 2,k})
\nn \\
& + & \frac{1}{2} [(J\xi_x)_{j+\frac 1 2,k} + (J\xi_x)_{j-\frac 1 2,k}] (\dot{x}_{j+\frac 1 2,k} - \dot{x}_{j-\frac 1 2,k})
\nn \\
&+ & \frac{1}{2} [(J\xi_y)_{j+\frac 1 2,k}  - (J\xi_y)_{j-\frac 1 2,k} ] (\dot{y}_{j+\frac 1 2,k} + \dot{y}_{j-\frac 1 2,k})
\nn \\
&+ & \frac{1}{2} [(J\xi_y)_{j+\frac 1 2,k}  + (J\xi_y)_{j-\frac 1 2,k} ] (\dot{y}_{j+\frac 1 2,k} - \dot{y}_{j-\frac 1 2,k})
\nn \\
&+ & \frac{1}{2} [(J\eta_x)_{j,k+\frac 1 2}  - (J\eta_x)_{j,k-\frac 1 2}] (\dot{x}_{j,k+\frac 1 2} + \dot{x}_{j,k-\frac 1 2})
\nn \\
&+ & \frac{1}{2} [(J\eta_x)_{j,k+\frac 1 2}  + (J\eta_x)_{j,k-\frac 1 2}] (\dot{x}_{j,k+\frac 1 2} - \dot{x}_{j,k-\frac 1 2})
\nn \\
&+ & \frac{1}{2} [(J\eta_y)_{j,k+\frac 1 2} - (J\eta_y)_{j,k-\frac 1 2} ] (\dot{y}_{j,k+\frac 1 2}  + \dot{y}_{j,k-\frac 1 2})
\nn \\
&+ & \frac{1}{2} [(J\eta_y)_{j,k+\frac 1 2} + (J\eta_y)_{j,k-\frac 1 2} ] (\dot{y}_{j,k+\frac 1 2}  - \dot{y}_{j,k-\frac 1 2}) .
\label{e4-14}
\eey
Notice that the approximations defined in (\ref{e4-11}) and (\ref{e4-12}) satisfy
\bey
&& \dot{x}_{j+\frac 1 2,k} + \dot{x}_{j-\frac 1 2,k} = \dot{x}_{j,k+\frac 1 2} + \dot{x}_{j,k-\frac 1 2},
\nn \\
&& \dot{y}_{j+\frac 1 2,k} + \dot{y}_{j-\frac 1 2,k} = \dot{y}_{j,k+\frac 1 2}  + \dot{y}_{j,k-\frac 1 2},
\nn \\
&& (J\xi_x)_{j+\frac 1 2,k} - (J\xi_x)_{j-\frac 1 2,k} + (J\eta_x)_{j,k+\frac 1 2}  - (J\eta_x)_{j,k-\frac 1 2} = 0,
\nn \\
&& (J\xi_y)_{j+\frac 1 2,k}  - (J\xi_y)_{j-\frac 1 2,k} + (J\eta_y)_{j,k+\frac 1 2} - (J\eta_y)_{j,k-\frac 1 2} = 0.
\nn
\eey
Inserting these identities into (\ref{e4-14}), we get
\bey
\dot{J}_{j,k} & = & 
 \frac{1}{2} [(J\xi_x)_{j+\frac 1 2,k} + (J\xi_x)_{j-\frac 1 2,k}] (\dot{x}_{j+\frac 1 2,k} - \dot{x}_{j-\frac 1 2,k})
\nn \\
&+ & \frac{1}{2} [(J\xi_y)_{j+\frac 1 2,k}  + (J\xi_y)_{j-\frac 1 2,k} ] (\dot{y}_{j+\frac 1 2,k} - \dot{y}_{j-\frac 1 2,k})
\nn \\
&+ & \frac{1}{2} [(J\eta_x)_{j,k+\frac 1 2}  + (J\eta_x)_{j,k-\frac 1 2}] (\dot{x}_{j,k+\frac 1 2} - \dot{x}_{j,k-\frac 1 2})
\nn \\
&+ & \frac{1}{2} [(J\eta_y)_{j,k+\frac 1 2} + (J\eta_y)_{j,k-\frac 1 2} ] (\dot{y}_{j,k+\frac 1 2}  - \dot{y}_{j,k-\frac 1 2}) .
\label{gcl-3}
\eey
Moreover, it can be shown that
\bey
&& \frac{d }{d t} \frac{1}{2} [(J\eta_y)_{j,k+\frac 1 2} + (J\eta_y)_{j,k-\frac 1 2} ] = (\dot{x}_{j+\frac 1 2,k} - \dot{x}_{j-\frac 1 2,k}),
\nn \\
&& \frac{d }{d t} \frac{1}{2} [(J\xi_x)_{j+\frac 1 2,k} + (J\xi_x)_{j-\frac 1 2,k}]= (\dot{y}_{j,k+\frac 1 2}  - \dot{y}_{j,k-\frac 1 2}) ,
\nn \\
&& \frac{d }{d t} \frac{1}{2} [(J\eta_x)_{j,k+\frac 1 2}  + (J\eta_x)_{j,k-\frac 1 2}] = - (\dot{y}_{j+\frac 1 2,k} - \dot{y}_{j-\frac 1 2,k}),
\nn \\
&& \frac{d }{d t} \frac{1}{2} [(J\xi_y)_{j+\frac 1 2,k}  + (J\xi_y)_{j-\frac 1 2,k} ] = - (\dot{x}_{j,k+\frac 1 2} - \dot{x}_{j,k-\frac 1 2}) .
\nn
\eey
Using this and (\ref{e4-13}) we can see that (\ref{gcl-3}), and therefore (\ref{gcl-1}), hold.
\hfill \qed
\end{rem}

\section{Numerical examples}
\label{SEC:numerics}

In this section we present numerical results obtained with the numerical schemes developed
in the previous sections for two one dimensional and one two dimensional examples. Our objective is
to verify the stability and accuracy of those schemes.

\begin{exam}
\label{exam5.1}
We first consider a one dimensional  example in the form (\ref{ibvp-1d}).  The coefficients
are given by
\beq
\begin{cases}
a(x,t) = 1,\quad b(x,t) = 0,\quad c(x,t) = 0,\\
x_l = 0,\quad x_r = \pi .
\end{cases}
\label{exam5.1-1}
\eeq
The source term $f(x,t)$, the initial solution, and the Dirichlet boundary conditions are chosen
such that the exact solution of the IBVP is given by
\beq
u_{exact}(x,t) = (2 + \sin(\pi t)) \sin(x)  .
\label{exam5.1-2}
\eeq
Notice that we have $g(x,t) \equiv 0$ for this exact solution.
The mesh is chosen as
\beq
x_j(t) = \frac{j \pi}{J_{max}} + \frac{1}{4} \sin\left (\frac{2 j \pi}{J_{max}}\right )  \sin(\omega t),\quad j = 0, ..., J_{max}
\label{exam5.1-3}
\eeq
where the parameter $\omega$ is used to control the speed of mesh movement.
The trajectories of two meshes with $\omega = 2\pi$ and $20 \pi$, respectively, are shown in Fig. \ref{f5-1-1}.

Fig. \ref{f5-1-2}(a) shows the maximum error as a function of $\Delta t$  obtained with scheme (\ref{col-1}) + (\ref{fd-2+1})
($m=1$) for mesh (\ref{exam5.1-3}) ($\omega = 2\pi$). In the computation, $J_{max}$ is taken sufficiently
large ($J_{max} = 1000$). In this case, the spatial discretization error is ignorable and the total error
is dominated by the temporal discretization error. The results show the $O(\Delta t^2)$ behavior of the error,
consistent with the theoretical prediction. They also demonstrate that the scheme is stable for all used values of $\Delta t$.

Fig. \ref{f5-1-2}(b) shows the maximum error as a function of $J_{max}$. The results are obtained
with $m = 1, 2, 3$ and the time step size $\Delta t = (\pi/J_{max})^{1/m}$, respectively. The reason $\Delta t$ is
chosen this way is that the error for scheme (\ref{fd-2+2}) is of order $O(\Delta t^{2m}) + O(\Delta x^2)$,
which reduces to $O(J_{max}^{-2})$ with this choice of $\Delta t$.
The figure confirms that the maximum error converges quadratically for all three cases.

The numerical results obtained with scheme (\ref{col-1}) + (\ref{fd-2+1}) ($m=1$) for a faster moving mesh $\omega = 20\pi$
are shown in Fig. \ref{f5-1-3}. Once again, the theoretically predicted order of convergence (second order
in both time and space for $m=1$) is observed. Interestingly, one may observe that there is a bump in the curve
in Fig. \ref{f5-1-3}(a). This is because for large $\Delta t$, the fast movement of the mesh is not ``felt'' by
the integration and this results in smaller error.

To conclude this example, we study the approximation of the boundary condition (\ref{bc-3})
with a nonhomogeneous term.  Applying the collocation scheme (\ref{col-1}) to (\ref{bc-3}) (for simplicity
we keep it in the old variable), we get
\beq
u_0({t}_{n,j}) = g(x_l({t}_{n,j}), {t}_{n,j}),\quad u_{J_{max}}({t}_{n,j}) = g(x_r({t}_{n,j}), {t}_{n,j}),\quad j = 1, ..., m
\label{bc-4}
\eeq
where $u_0(t) \approx u(x_0(t),t)$, $u_{J_{max}}(t) \approx u(x_{J_{max}}(t), t)$,
and $t_{n,1},..., t_{n,m}$ are the collocation points (cf. (\ref{col-points})).
This has the advantage that the boundary condition is collocated at the same points as the PDE and thus
can be implemented conveniently within the framework of the method of lines. 
On the other hand, since the boundary condition is not imposed at $t=t_{n+1}$,
error may occur there and accumulate and eventually result in inaccurate computational solutions.
This does not seem to be an issue for a homogeneous boundary condition.
However, the situation is different when a non-homogenous boundary condition is involved.

To see this, we consider a case where the same PDE and mesh (with $\omega = 2\pi$)
are used but the source term, initial condition, and (non-homogeneous) boundary condition
are chosen such that the exact solution is given by
\beq
u_{exact}(x,t) = (2 + \sin(\pi t)) \cos(x)  .
\label{exam5.1-4}
\eeq
Fig. \ref{f5-1-4}(a) shows the numerical results obtained with (\ref{bc-4}).
It can be seen that the maximum error converges at a rate of $O(J_{max}^{-1})$ for both cases
$m=2$ and $m=3$, much worse than the predicted order $O(J_{max}^{-2})$. 

To avoid this difficulty, we consider collocating the boundary condition
at the approximation points (cf. (\ref{interp-points})), i.e., 
\beq
u_0(\tilde{t}_{n,j}) = g(x_l(\tilde{t}_{n,j}), \tilde{t}_{n,j}),\quad u_{J_{max}}(\tilde{t}_{n,j}) = g(x_r(\tilde{t}_{n,j}), \tilde{t}_{n,j}),
\quad j = 1, ..., m.
\label{bc-5}
\eeq
The main advantage of this approximation is that the boundary condition is now enforced at $t = \tilde{t}_{n,m} = t_{n+1}$.
It is not difficult to show that (\ref{bc-4}) and (\ref{bc-5}) are equivalent for homogeneous Dirichlet boundary conditions but
different in general. Fig.  \ref{f5-1-4}(b) shows the results obtained with (\ref{bc-5}).
It can be seen that the second order convergence rate is recovered for all three cases.
\hfill \qed
\end{exam}

\begin{figure}[t!!!]
\centering
\hbox{
\begin{minipage}[t]{3.0in}
\centerline{(a)}
\includegraphics[width=3.0in]{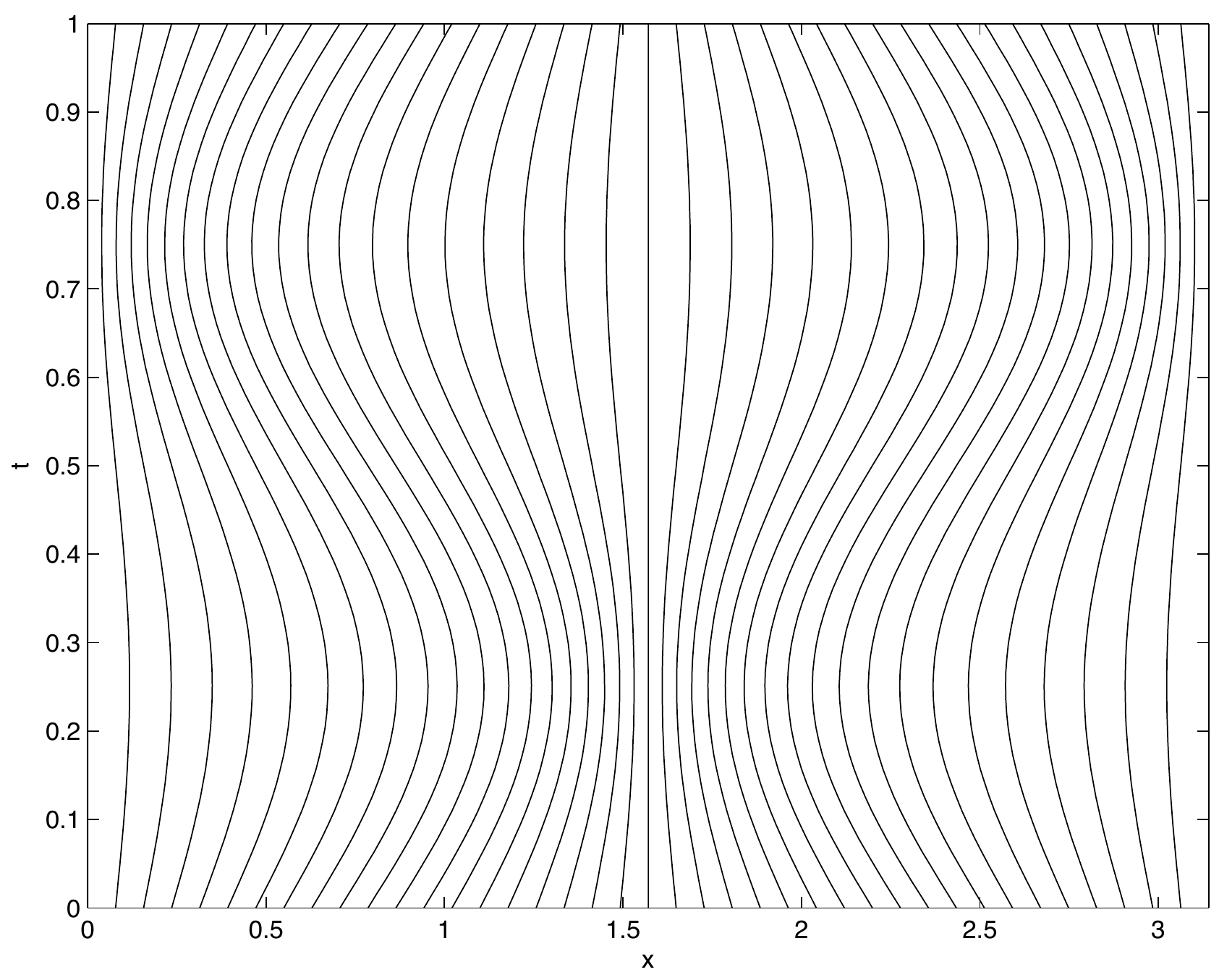}
\end{minipage}
\begin{minipage}[t]{3.0in}
\centerline{(b)}
\includegraphics[width=3.0in]{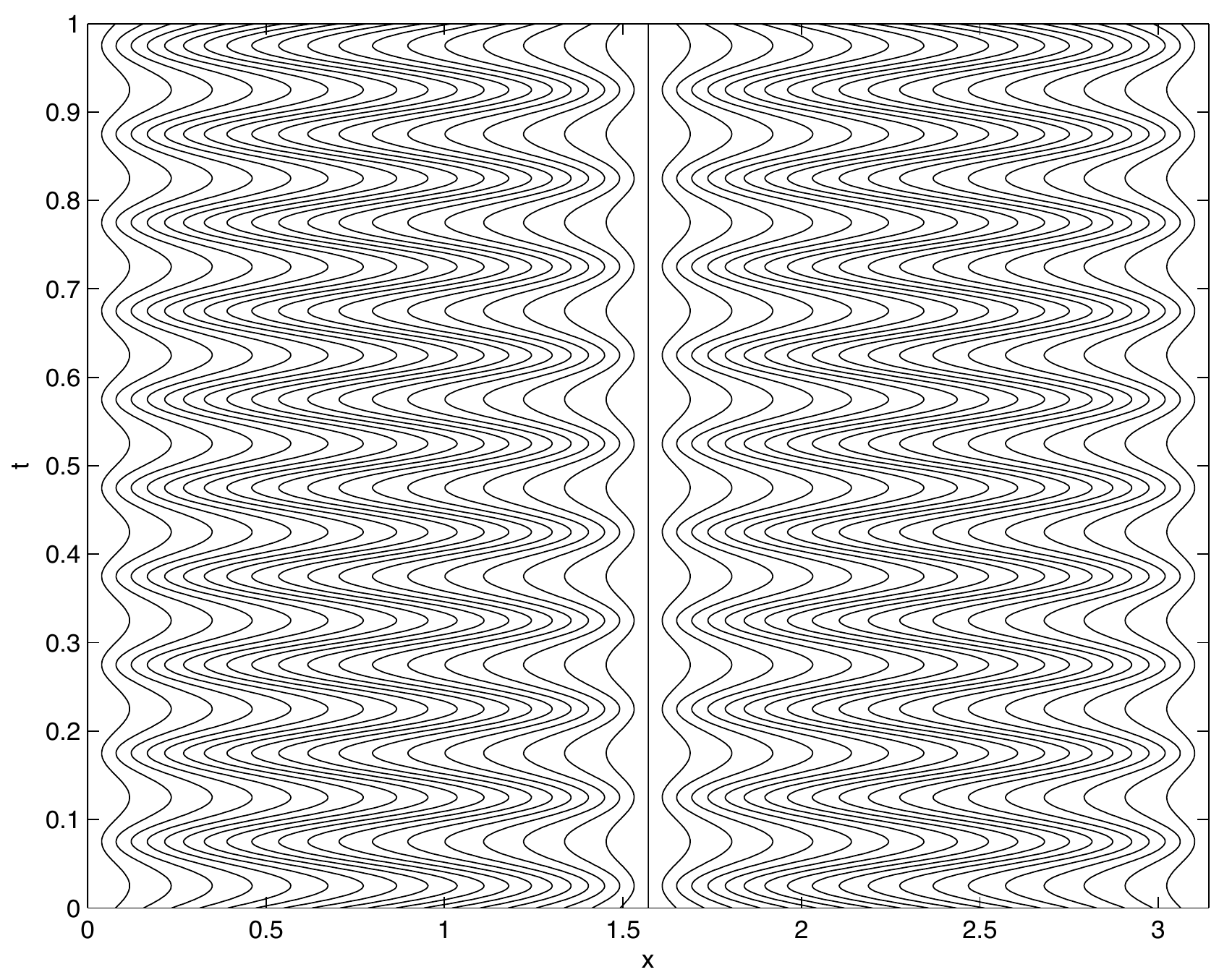}
\end{minipage}
}
\caption{Example~\ref{exam5.1}: The trajectories of two meshes of 41 points for $\omega = 2\pi$
and $\omega=20\pi$ are shown in (a) and (b), respectively.}
\label{f5-1-1}
\end{figure}

\begin{figure}[thb]
\centering
\hbox{
\begin{minipage}[t]{3.0in}
\centerline{(a) $m=1$}
\includegraphics[width=3.0in]{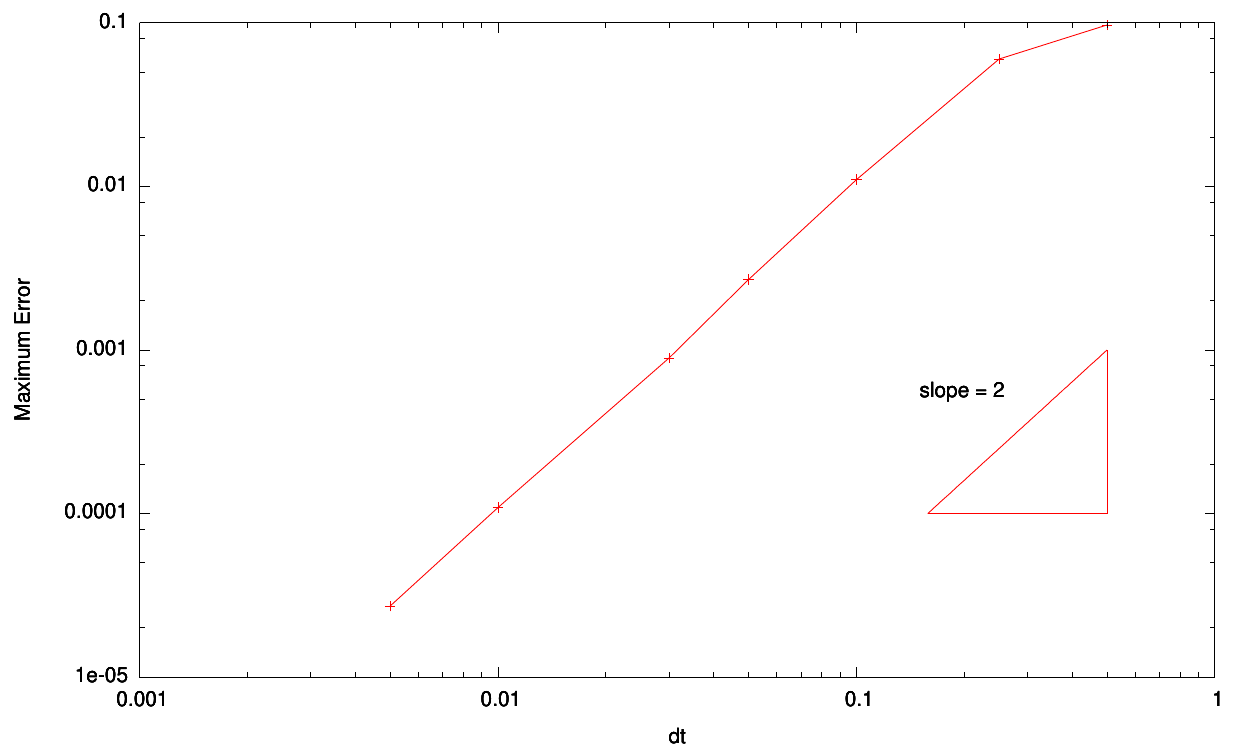}
\end{minipage}
\begin{minipage}[t]{3.0in}
\centerline{(b) $m=1,2,3$}
\includegraphics[width=3.0in]{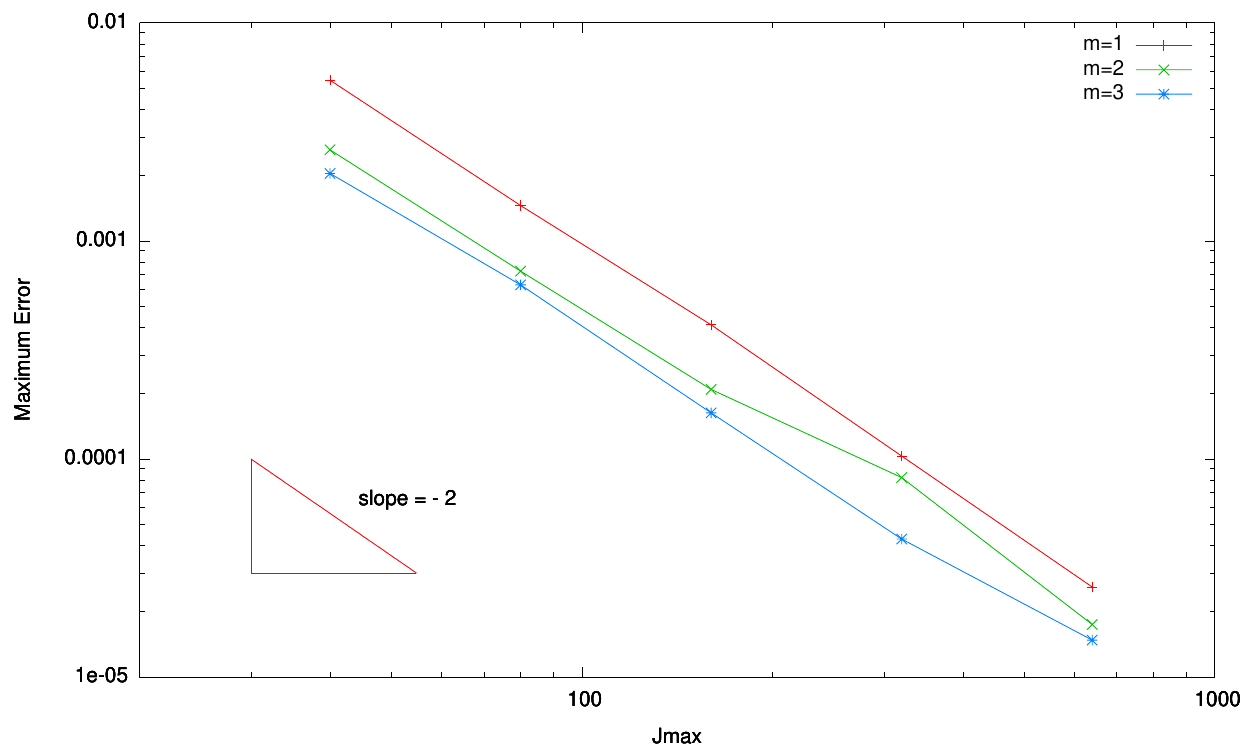}
\end{minipage}
}
\caption{The maximum solution error for scheme (\ref{col-1}) + (\ref{fd-2+1})
applied to Example \ref{exam5.1} ($\omega=2\pi$). The error is plotted in (a) as a function of $\Delta t$ for $J_{max}=1000$
and in (b) as a function of $J_{max}$ for $m=1$ ($\Delta t = \pi/J_{max}$), $m=2$ ($\Delta t = \sqrt{\pi/J_{max}}$),
and $m=3$ ($\Delta t = \sqrt[3]{\pi/J_{max}}$).}
\label{f5-1-2}
\end{figure}

\begin{figure}[thb]
\centering
\hbox{
\begin{minipage}[t]{3.0in}
\centerline{(a) $m=1$}
\includegraphics[width=3.0in]{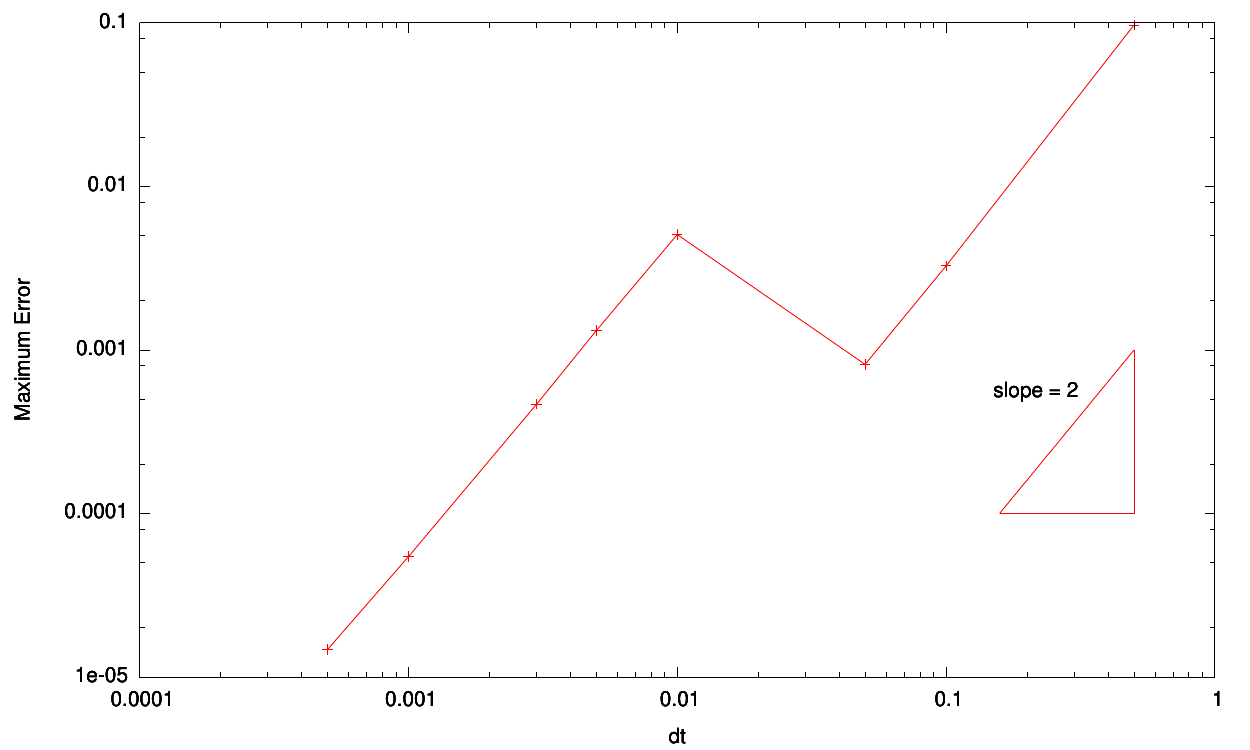}
\end{minipage}
\begin{minipage}[t]{3.0in}
\centerline{(b) $m=1$}
\includegraphics[width=3.0in]{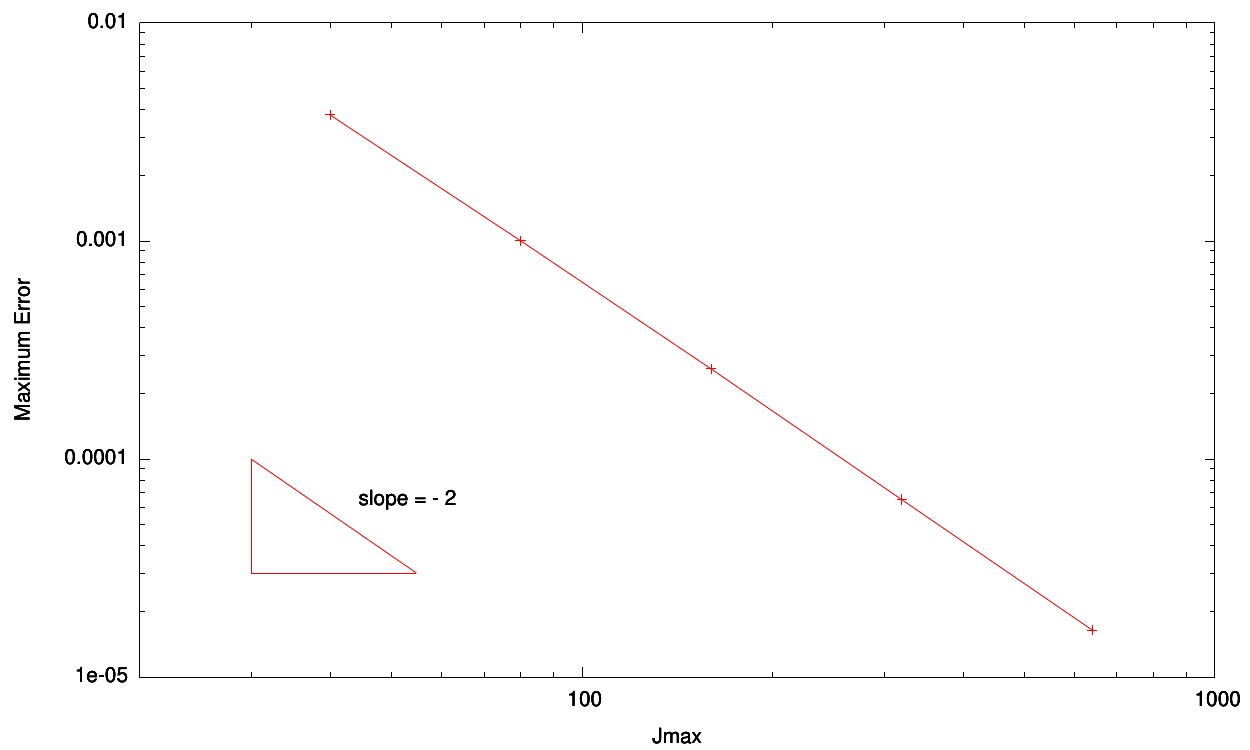}
\end{minipage}
}
\caption{The maximum error for scheme (\ref{col-1}) + (\ref{fd-2+1}) ($m=1$)
applied to Example \ref{exam5.1} ($\omega=20 \pi$). The error is plotted in (a) as a function of $\Delta t$ for $J_{max}=1000$
and in (b) as a function of $J_{max}$ ($\Delta t = 0.1 \pi/J_{max}$).}
\label{f5-1-3}
\end{figure}

\begin{figure}[thb]
\centering
\hbox{
\begin{minipage}[t]{3.0in}
\centerline{(a) BC approximation (\ref{bc-4})}
\includegraphics[width=3.0in]{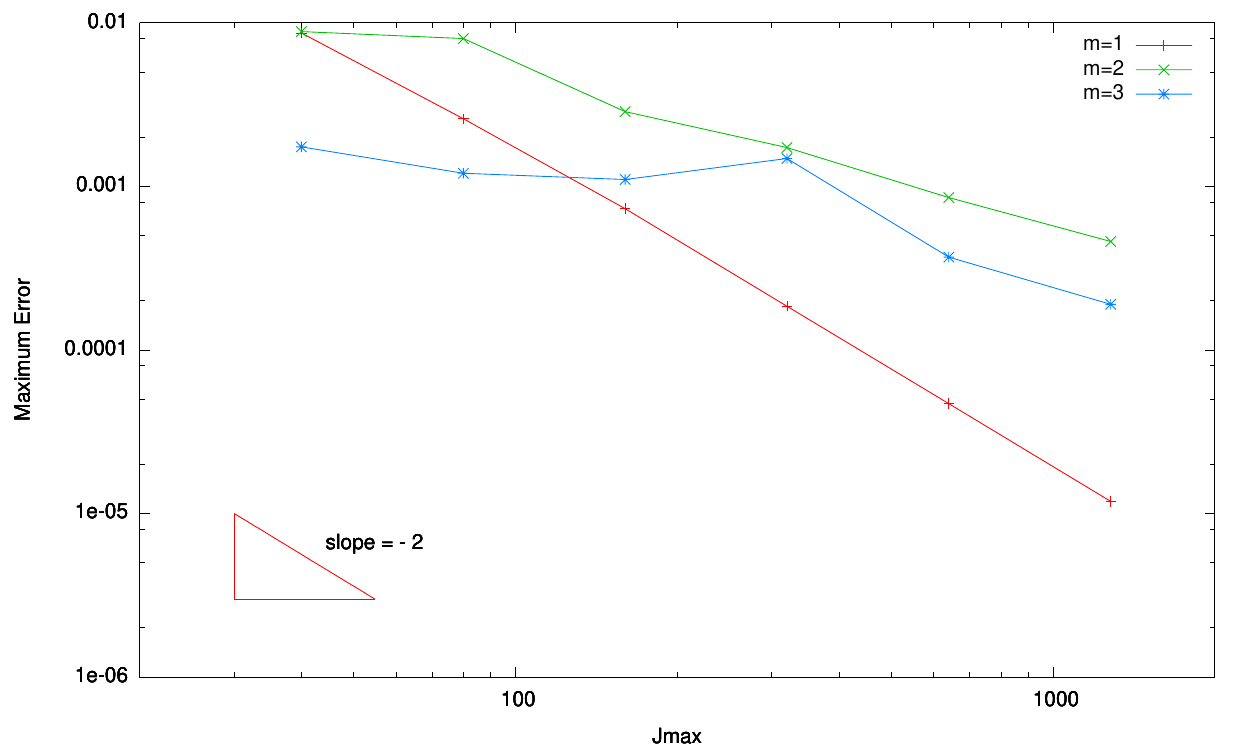}
\end{minipage}
\begin{minipage}[t]{3.0in}
\centerline{(b) BC approximation (\ref{bc-5})}
\includegraphics[width=3.0in]{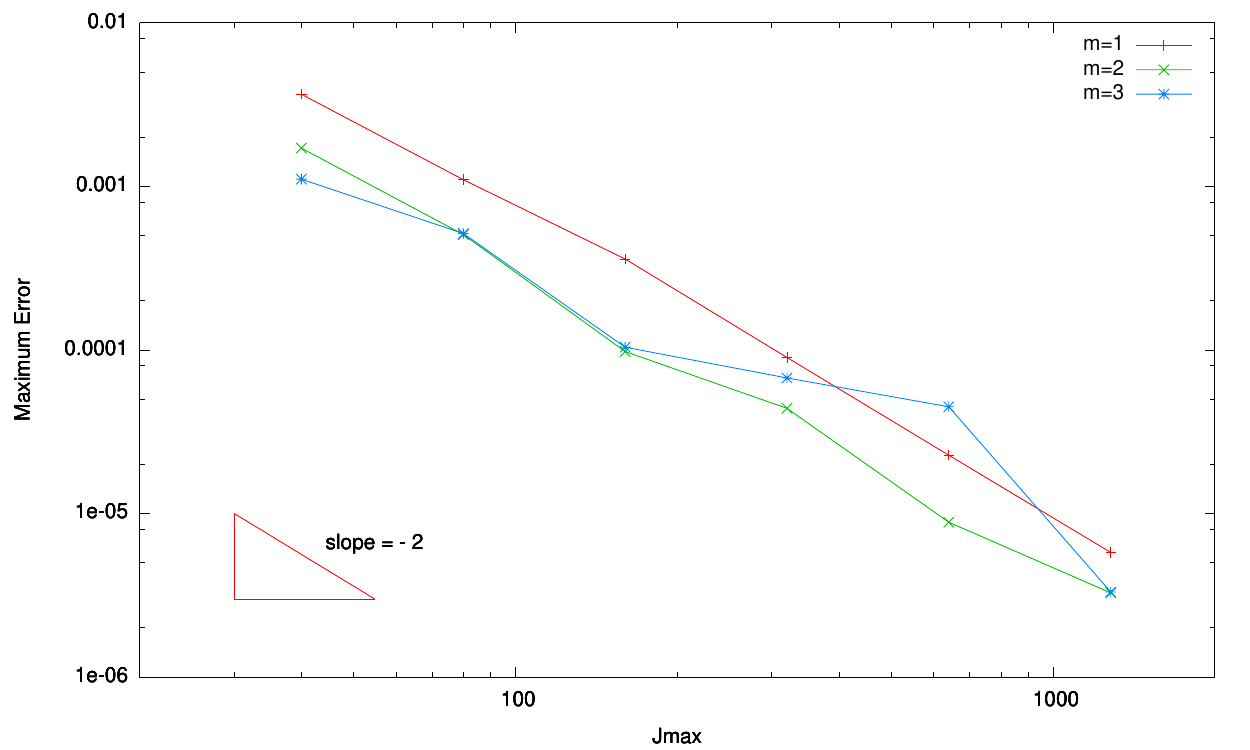}
\end{minipage}
}
\caption{The maximum error for scheme (\ref{col-1}) + (\ref{fd-2+1}) ($m=1,2,3$)
applied to Example \ref{exam5.1} ($\omega=2 \pi$) with exact solution (\ref{exam5.1-4}).
The error is plotted as a function of $J_{max}$ with
$\Delta t = (\pi/J_{max})^{1/m}$.}
\label{f5-1-4}
\end{figure}

\begin{exam}
\label{exam5.2}
We consider the one dimensional  heat equation with a moving domain. The problem is 
in the form (\ref{ibvp-1d}) with
\beq
\begin{cases}
a(x,t) = 1,\quad b(x,t) = 0,\quad c(x,t) = 0,\\
x_l(t) = \frac{\pi}{3} \sin(\omega t),\quad x_r(t) = \pi - \frac{\pi}{3} \sin(\omega t),
\end{cases}
\label{exam5.2-1}
\eeq
where the parameter $\omega$ is used to control the speed of the boundary movement.
The source term $f(x,t)$, initial solution, and Dirichlet boundary condition are chosen
such that the exact solution of the IBVP is given by
\beq
u_{exact}(x,t) = \sin\left (\frac{\pi (x- x_l(t))}{x_r(t)-x_l(t)}\right ) (2 + \sin(\pi t)) .
\label{exam5.2-2}
\eeq
The mesh is defined as
\beq
x_j = x_l(t) + \frac{j}{J_{max}} (x_r(t)-x_l(t)),\quad j = 0, ..., J_{max}.
\label{exam5.2-2}
\eeq
The trajectories of two meshes with $\omega = 2\pi$ and $20 \pi$ are shown in Fig. \ref{f5-2-1} 

Recall that the mesh is treated linearly on each time interval $[t_n, t_{n+1}]$. As a consequence,
when the boundary of the domain is moving, the first and last mesh points, $x_0(t)$ and $x_{J_{max}}(t)$,
generally do not coincide with the boundary points $x_l(t)$ and $x_r(t)$ for $t \in (t_n, t_{n+1})$;
see the illustration in Fig. \ref{f5-2-2}.
In this situation, a more accurate approximation of the boundary conditions than (\ref{bc-4}) or (\ref{bc-5}) is needed.
By expanding $u(x_0(t), t)$ and $u(x_1(t), t)$ about $x=x_l(t)$ and $u(x_{J_{max}}(t), t)$ and $u(x_{J_{max}-1}(t), t)$
about $x_r(t)$, we get
\beq
\begin{cases}
(x_1-x_l) (u_0 - g(x_l, t)) - (x_0-x_l) (u_1-g(x_l, t))  = 0,\\
(x_{J_{max}-1}-x_r) (u_{J_{max}}-g(x_r, t)) - (x_{J_{max}}-x_r) (u_{J_{max}-1}-g(x_r, t))  = 0.
\end{cases}
\label{bc-6}
\eeq
As discussed in Example \ref{exam5.1}, the above conditions are imposed at the approximation points,
$\tilde{t}_{n,1},...,\tilde{t}_{n,m}$ (cf. (\ref{interp-points})).

Numerical results obtained with scheme (\ref{col-1}) + (\ref{fd-2+1}) ($m=1,2,3$)
for $\omega = 2\pi$ and $20\pi$ are shown
in Figs. \ref{f5-2-3} and \ref{f5-2-4}, respectively. It can be seen that the scheme is stable (the solution is bounded)
and the error is of order $O(\Delta t^{2m}) + O(\Delta x^2)$ ($m=1,2,3$), consistent with the
theoretical prediction.
\hfill \qed
\end{exam}

\begin{figure}[thb]
\centering
\hbox{
\begin{minipage}[t]{3.0in}
\includegraphics[width=3.0in]{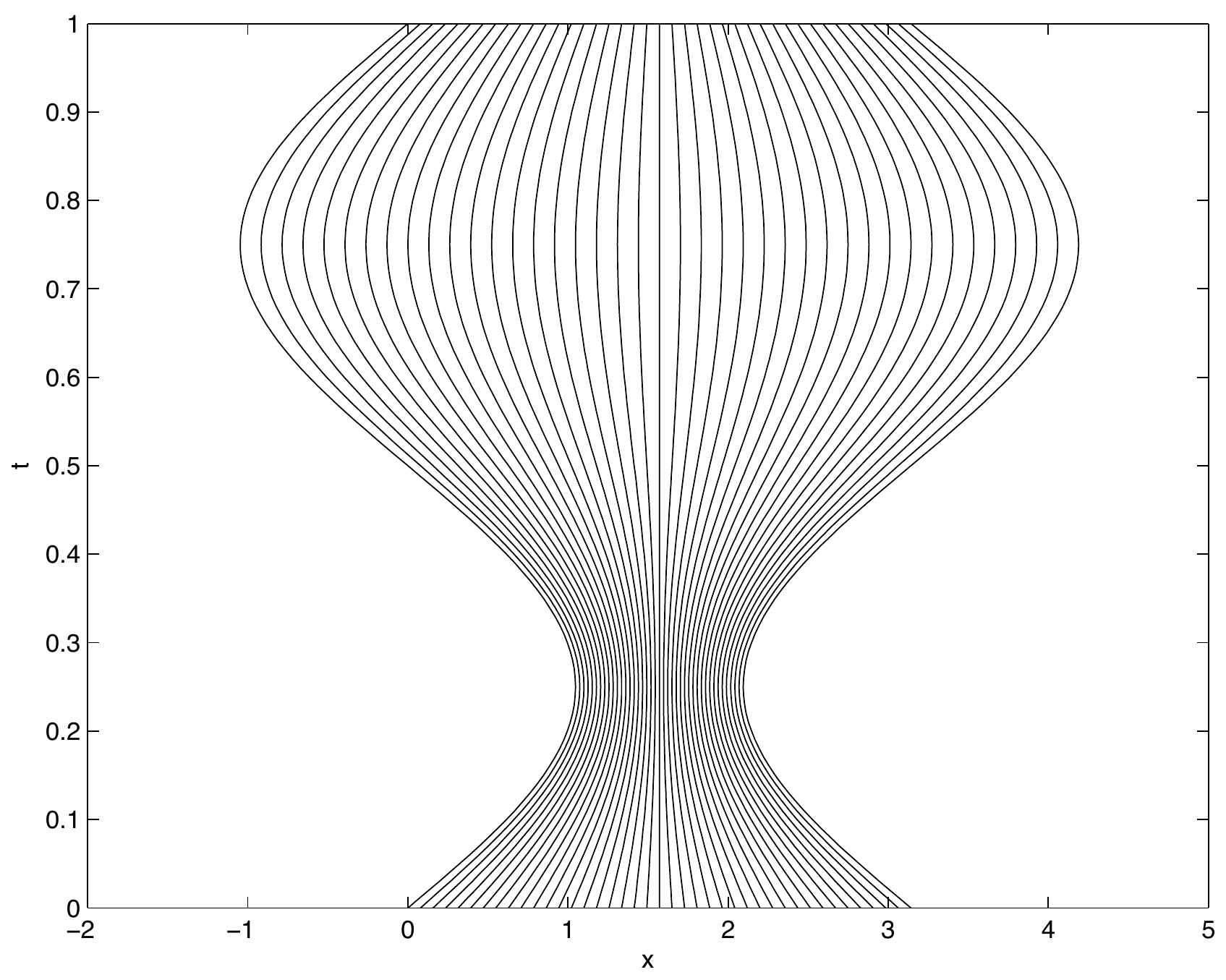}
\end{minipage}
\begin{minipage}[t]{3.0in}
\includegraphics[width=3.0in]{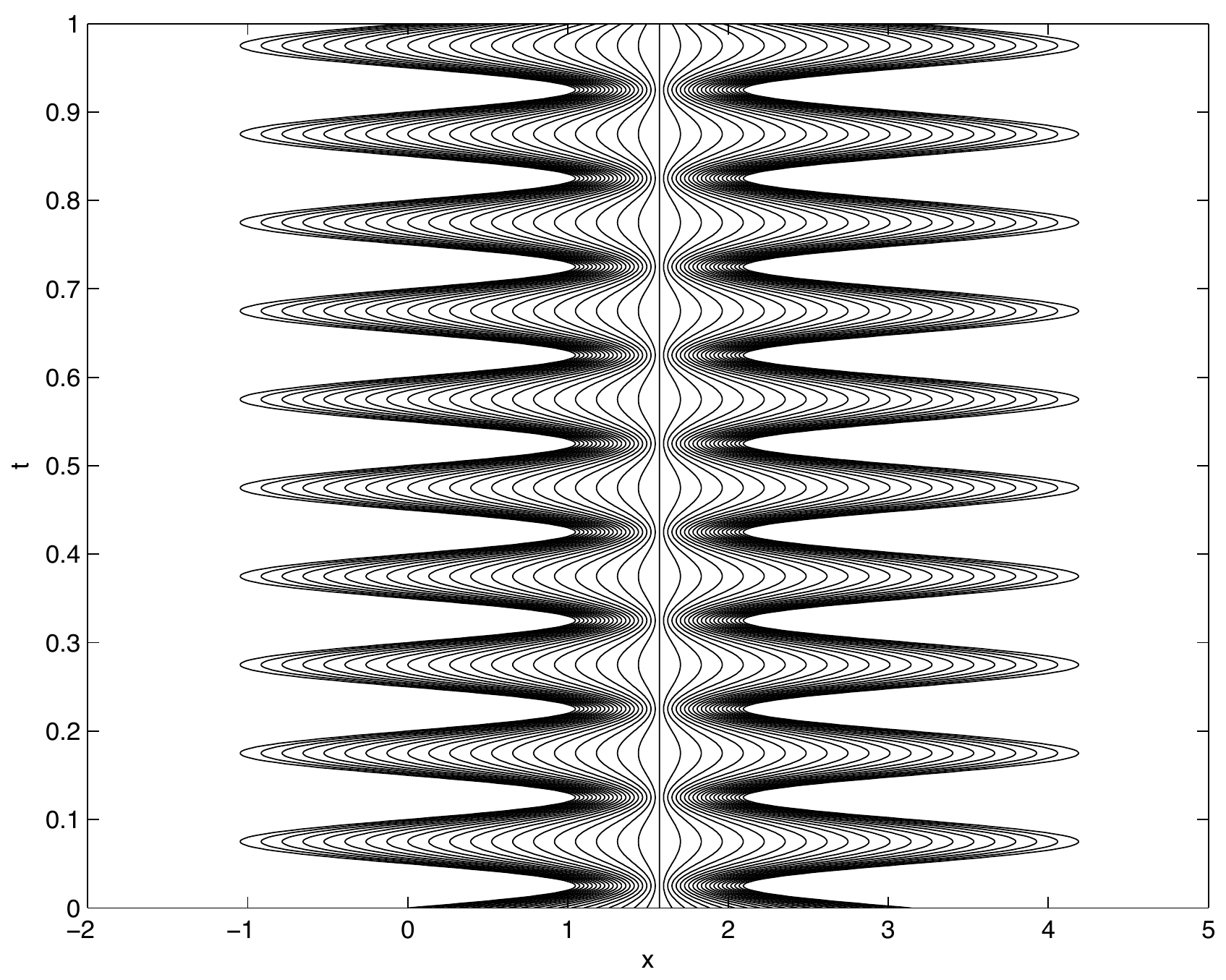}
\end{minipage}
}
\caption{Mesh trajectories of two meshes of 41 points for $\omega = 2\pi$ and $20\pi$, respectively.}
\label{f5-2-1}
\end{figure}

\begin{figure}[thb]
\centering
\begin{tikzpicture}[scale = 1.5 ]
\draw[thick] (-0.5,-0.5) parabola (1.5,3) node[above] {Left boundary $x_l(t)$};
\draw [-] (-1,0) -- (4,0) node[right]{$t_n$};
\draw [-] (-1,2) -- (4,2) node[right]{$t_{n+1}$};
\filldraw[black] (0.25,0) circle(1pt) node[below right] {$x_0^n$};
\filldraw[black] (1.19,2) circle(1pt);
\draw[above right] (1.25, 2) node {$x_0^{n+1}$};
\draw[-] (0.25,0) -- (1.19,2);
\draw [-] (-1,1) -- (4,1) node[right]{$t$};
\filldraw[black] (0.72,1) circle(1pt);
\draw [->] (0.47,1.25) -- (0.62,1.1);
\draw[above] (0.47,1.25) node {$x_0(t)$};
\filldraw[black] (0.81,1) circle(1pt);
\draw [->] (1.16,1.35) -- (0.91,1.1);
\draw[above right] (1.16,1.35) node {$x_l(t)$};
\end{tikzpicture}
\caption{A sketch of the boundary $x=x_l(t)$ and the first mesh point $x=x_0(t)$. Generally speaking,
$x_0(t)$ does not coincide with $x_l(t)$ for $t \in (t_n,t_{n+1})$.}
\label{f5-2-2}
\end{figure}
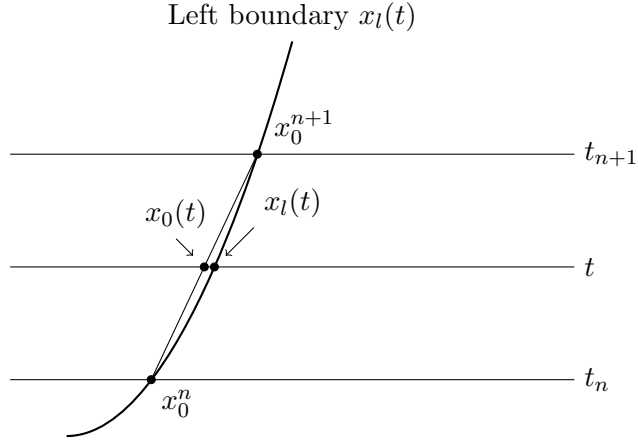

\begin{figure}[thb]
\centering
\hbox{
\begin{minipage}[t]{3.0in}
\centerline{(a) $m=1$}
\includegraphics[width=3.0in]{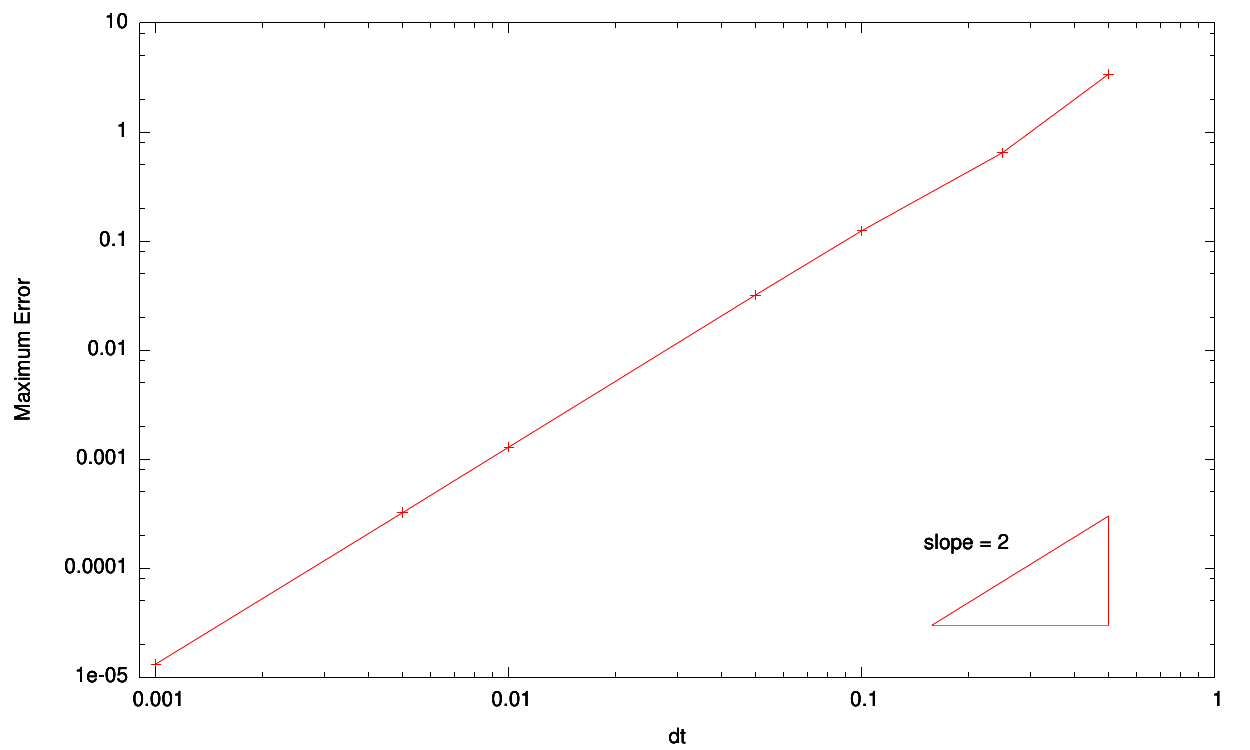}
\end{minipage}
\begin{minipage}[t]{3.0in}
\centerline{(b) $m=1,2,3$}
\includegraphics[width=3.0in]{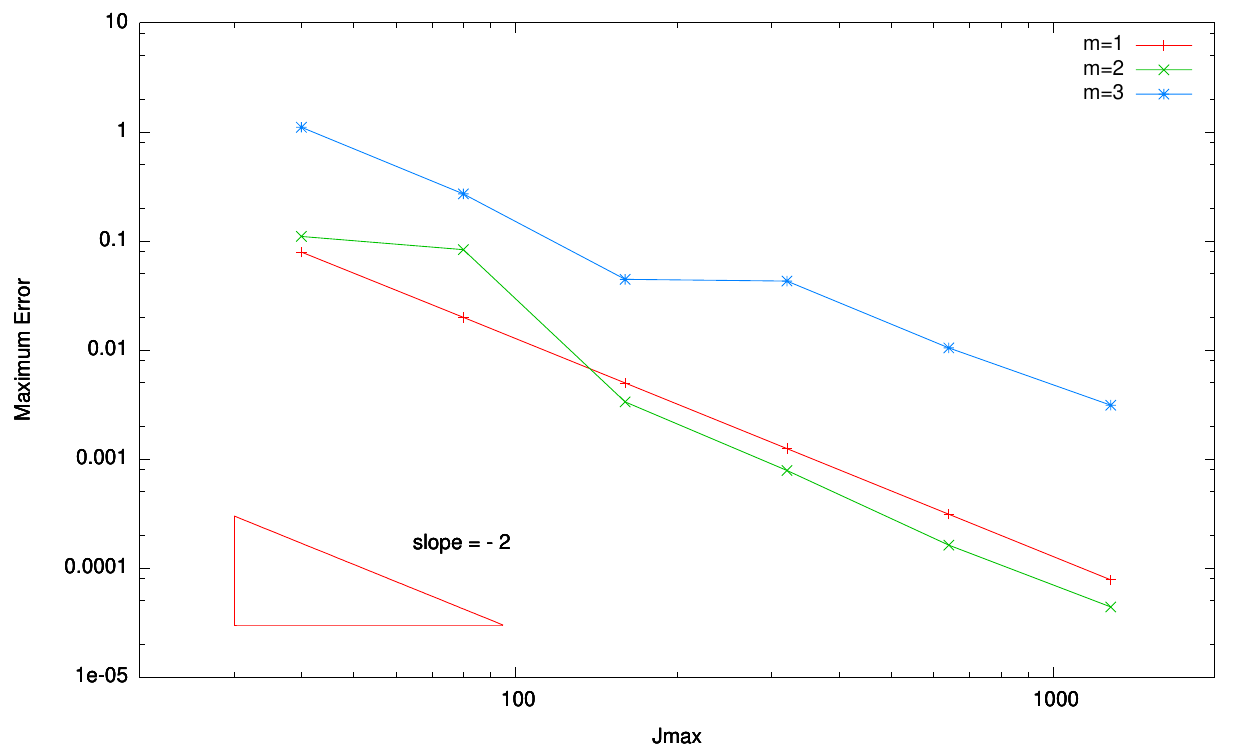}
\end{minipage}
}
\caption{The maximum error for scheme (\ref{col-1}) + (\ref{fd-2+1}) ($m=1,2,3$)
applied to Example \ref{exam5.2} ($\omega=2\pi$). The error is plotted in (a) as a function of $\Delta t$ for $J_{max}=1000$
and in (b) as a function of $J_{max}$ for $\Delta t = (\pi/J_{max})^{1/m},\; m =1,2,3$.}
\label{f5-2-3}
\end{figure}

\begin{figure}[thb]
\centering
\hbox{
\begin{minipage}[t]{3.0in}
\centerline{(a): $m=1$}
\includegraphics[width=3.0in]{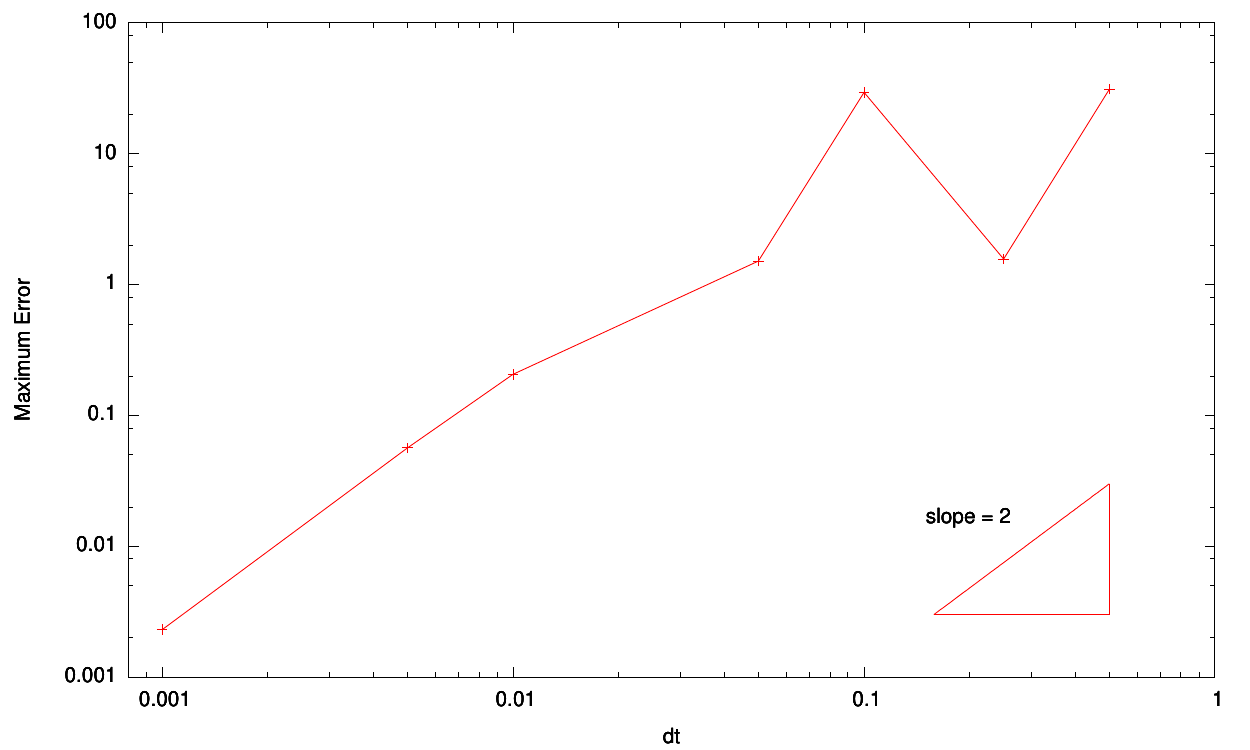}
\end{minipage}
\begin{minipage}[t]{3.0in}
\centerline{(b): $m=1$}
\includegraphics[width=3.0in]{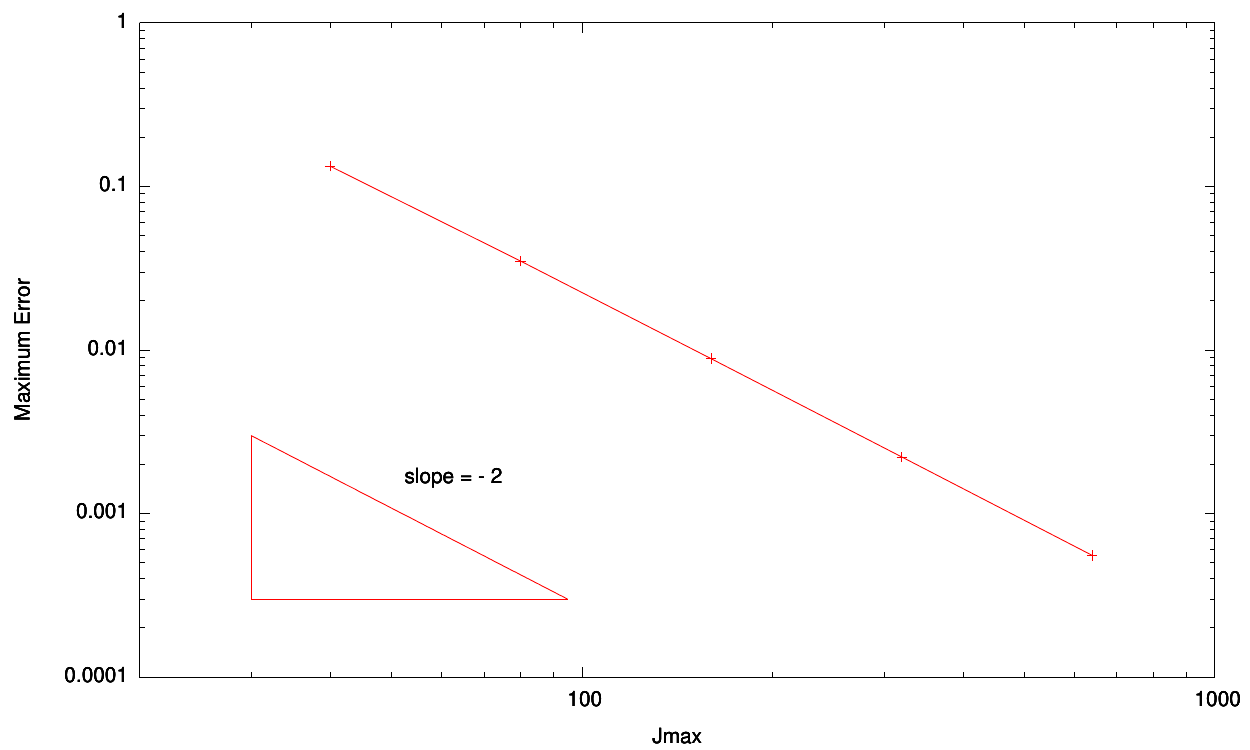}
\end{minipage}
}
\caption{The maximum error for scheme (\ref{col-1}) + (\ref{fd-2+1}) (with $m=1$)
applied to Example \ref{exam5.2} ($\omega=20\pi$). The error is plotted in (a) as a function of $\Delta t$ for $J_{max}=1000$
and in (b) as a function of $J_{max}$ for $\Delta t = 0.1\pi/J_{max}$.}
\label{f5-2-4}
\end{figure}

\begin{exam}
\label{exam5.3}
The last example is a two dimensional problem in the form of IBVP (\ref{ibvp}) with
\beq
\begin{cases}
a(x,y,t) = 1,\quad b_1(x,y,t) = b_2(x,y,t)=0,\quad c(x,t) = 0,\\
\Omega = (0,\pi)\times (0,\pi) .
\end{cases}
\label{exam5.3-1}
\eeq
The source term $f(x,y,t)$, initial solution, and Dirichlet boundary condition are chosen
such that the exact solution of the IBVP is given by
\beq
u_{exact}(x,y,t) = (2 + \sin(\pi t)) \sin(x) \sin(y) .
\label{exam5.3-2}
\eeq
The moving mesh is generated using the coordinate transformation 
\beq
x = \xi + 0.2 \sin(2 \xi) \sin( 2 \eta) \sin( \omega t),\quad
y = \eta + 0.2 \sin(2 \xi) \sin( 2 \eta) \sin( \omega t),\qquad (\xi, \eta) \in \Omega
\label{exam5.3-3}
\eeq
where the parameter $\omega$ is used to control the speed of mesh movement and
a rectangular mesh of $(J+1)(K+1)$ points is used for the $\xi$-$\eta$ domain. In our computation $\omega = 2\pi$ is used.
A $41\times 41$ moving mesh is shown in Fig. \ref{f5-3-1} for $t = 0.25$ and $t=0.75$.
The numerical results are shown in Fig.~\ref{f5-3-2}, which are consistent with the observations made for
the 1D examples and with the theoretical prediction.
\hfill \qed
\end{exam}

\begin{figure}[thb]
\centering
\hbox{
\begin{minipage}[t]{3.0in}
\centerline{(a) $t=0.25$}
\includegraphics[width=3.0in]{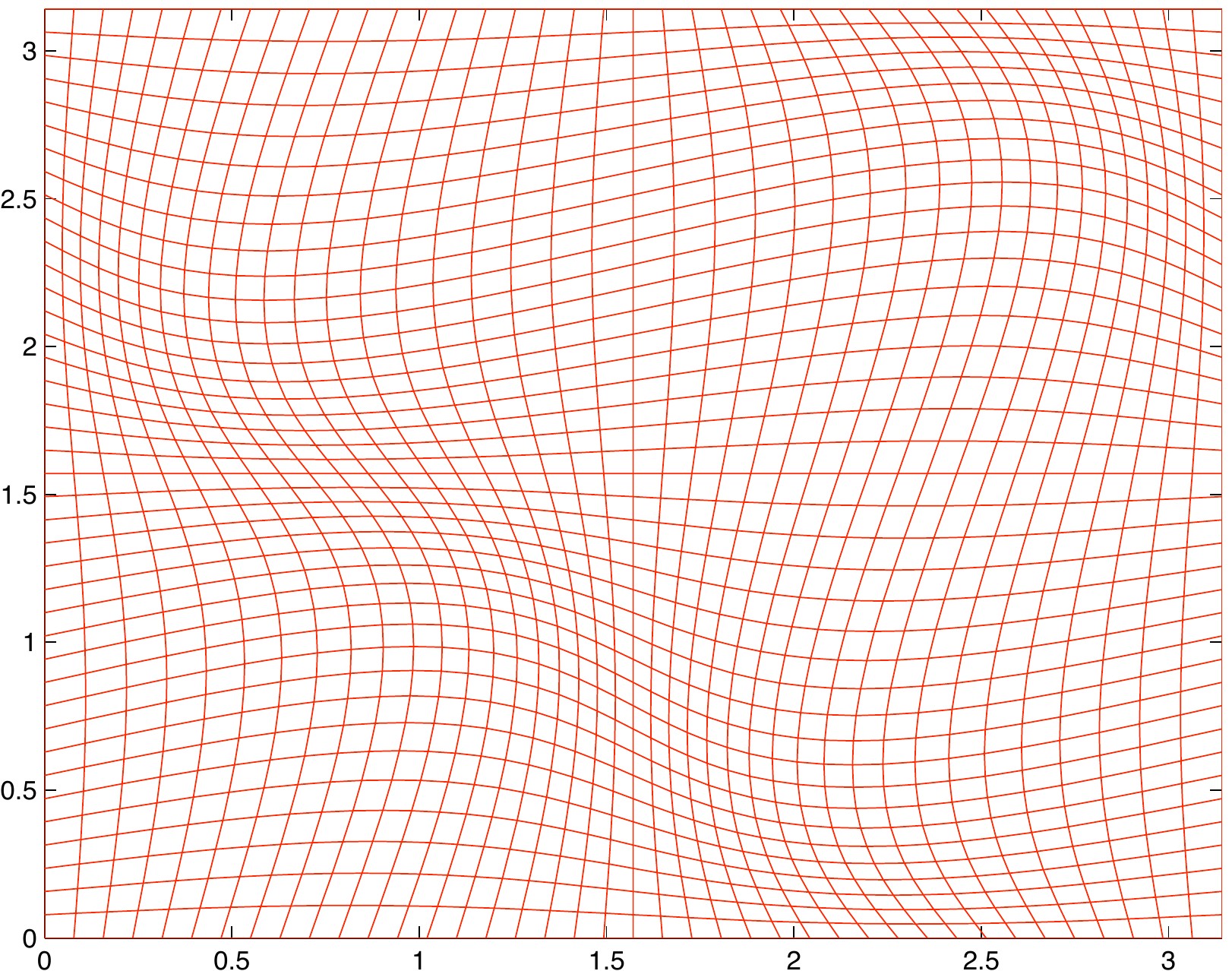}
\end{minipage}
\hspace{5mm}
\begin{minipage}[t]{3.0in}
\centerline{(b) $t=0.75$}
\includegraphics[width=3.0in]{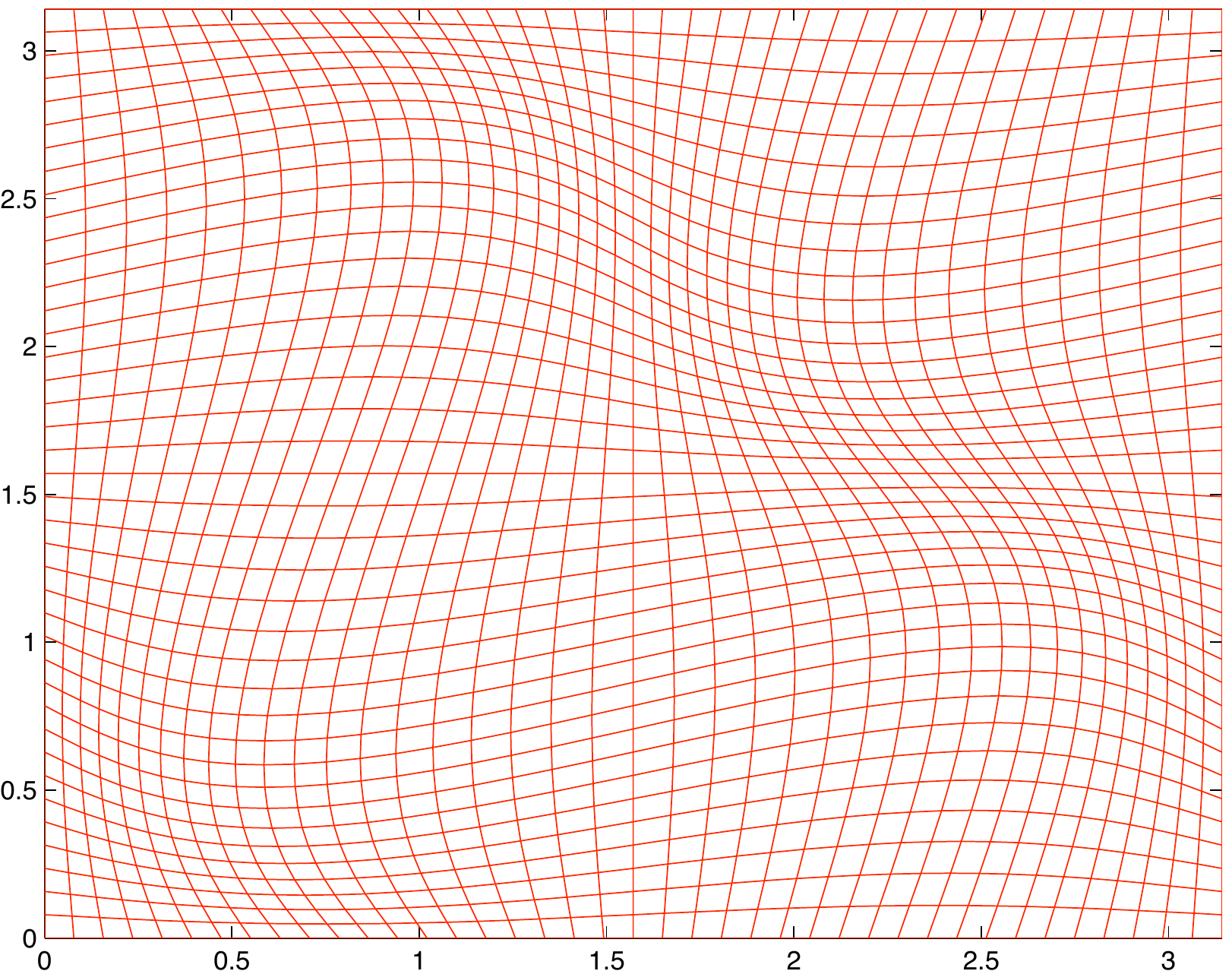}
\end{minipage}
}
\caption{A $41\times 41$ moving mesh for Example \ref{exam5.3} is shown for $t = 0.25$ and $t=0.75$.}
\label{f5-3-1}
\end{figure}

\begin{figure}[thb]
\centering
\hbox{
\begin{minipage}[t]{3.0in}
\centerline{(a) $m=1$}
\includegraphics[width=3.0in]{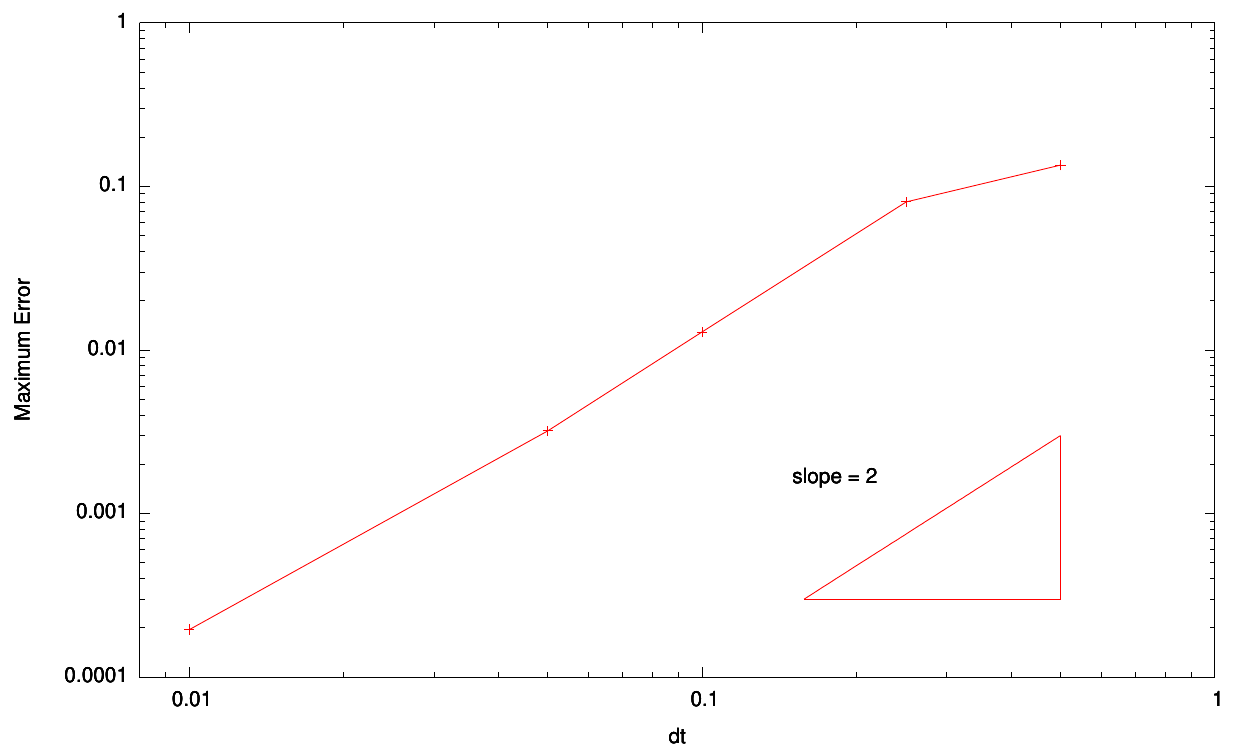}
\end{minipage}
\begin{minipage}[t]{3.0in}
\centerline{(b) $m=1,2,3$}
\includegraphics[width=3.0in]{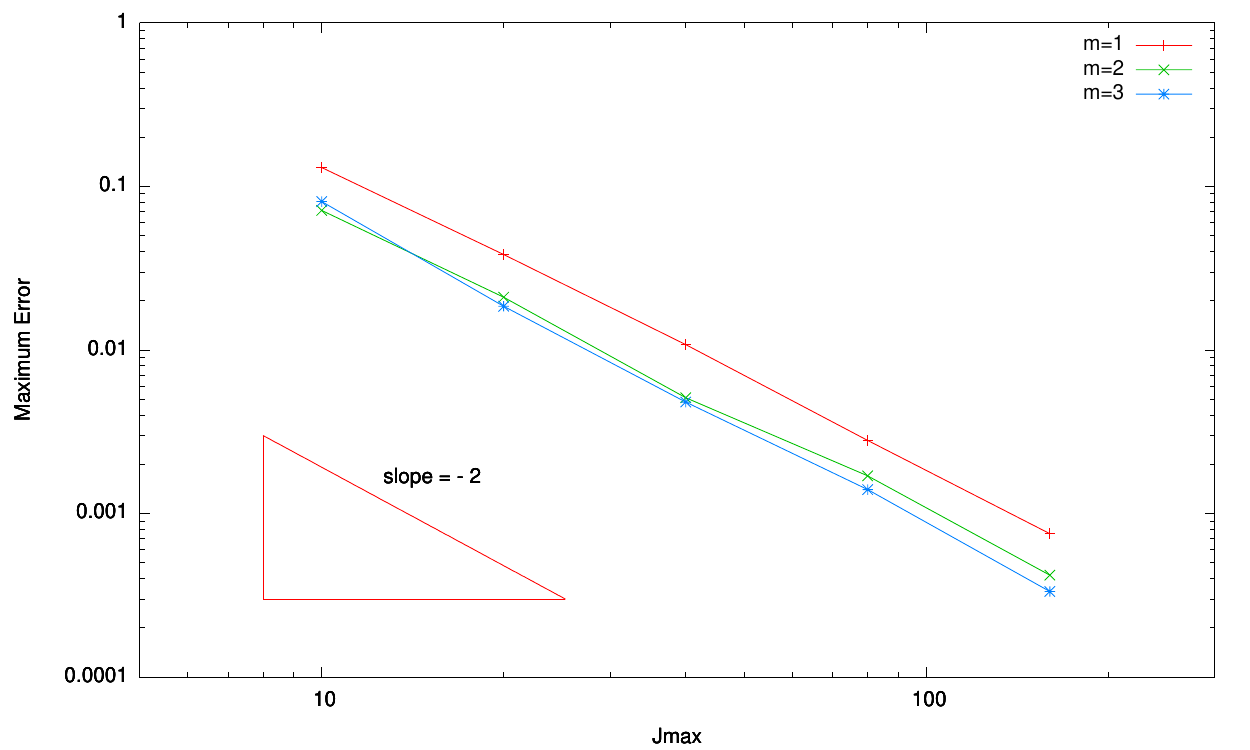}
\end{minipage}
}
\caption{The maximum error for scheme (\ref{col-1}) applied to (\ref{ode-3}) with (\ref{fd-2d-3}) and (\ref{fd-2d-4})
for Example \ref{exam5.3} ($\omega=2\pi$). The error is plotted in (a) as a function of $\Delta t$ for
$J_{max}= K_{max}=160$ ($J_{max}= K_{max}=320$ was used for $\Delta t = 0.01$)
and in (b) as a function of $J_{max}$ for $\Delta t = (\pi/J_{max})^{1/m},\; m =1,2,3$.}
\label{f5-3-2}
\end{figure}

\section{Conclusions and comments}

In the previous sections we have developed a family of finite difference schemes for linear convection-diffusion
equations on moving meshes. Those schemes can be of second and higher order in time, preserve a stability
inequality, and are unconditionally stable in the sense that they impose no constraint on time step size for stability purpose.
More specifically, scheme (\ref{col-1}) has been developed 
for ODE system of the form (\ref{ode-3}) with property (\ref{cond-1})
by first transforming (\ref{ode-3}) into (\ref{ode-v-1})
and then applying the $m$-point collocation scheme to the transformed system.
Several finite difference discretizations for one dimensional  and two dimensional convection-diffusion equations
on moving meshes have been constructed and shown to satisfy property (\ref{cond-1}) (cf. Theorems~\ref{thm3.1} and
\ref{thm4.1}). Numerical results presented in Section \ref{SEC:numerics} verify the theoretically predicted
stability and convergence order of those schemes.

Several generalizations of the current work are under investigation. It is interesting to know how
the current strategy can be used for nonlinear differential equations and wave equations and
how it can be combined with the method of lines for general differential equations. Moreover,
it is noted that scheme (\ref{col-1}) may not work efficiently
when mass matrix $M(t)$ is not diagonal (as in the case with finite element approximation).
Development of an efficient implementation in the finite element case certainly deserves further investigations.


\end{document}